\documentclass[12pt,a4paper,reqno]{amsart}

\usepackage{soul}
\usepackage{amsmath,enumitem, comment, microtype}
\usepackage{graphicx}
\usepackage{xcolor}
\usepackage{ulem}

\usepackage{amssymb,amsthm}
\usepackage{mathrsfs}
\usepackage{bbm}

\def\R{\mathbb R}
\def\T{\mathbb T}

\usepackage{a4wide}
\sloppy
\parskip=0.8ex



 \newtheorem{theo}{Theorem}[section]
 \newtheorem{cor}[theo]{Corollary}
 \newtheorem{lem}[theo]{Lemma}

 \newtheorem{defi}[theo]{Definition}
 \newtheorem{rem}[theo]{Remark}

\newtheorem{nota}[theo]{Notation}

\allowdisplaybreaks[2]

\title{Hausdorff dimension of recurrence sets for matrix transformations of tori}

\author{Zhang-nan Hu}
\address{Z.-N.~Hu, College of Science, China University of Petroleum, Beijing 102249, P. R. China}
\email{hnlgdxhzn@163.com}

\author{Bing Li*}\thanks{* Corresponding author} 
\address{B.~Li, School of Mathematics, South China University of Technology, Guangzhou, 510641, P. R. China}
\email{ scbingli@scut.edu.cn}

\date{\today}

\begin{document}

\begin{abstract}
 Let $T\colon\mathbb{T}^d\to \mathbb{T}^d$, defined by $T x=Ax(\bmod 1)$, where $A$ is a $d\times d$ integer matrix with eigenvalues $1<|\lambda_1|\le|\lambda_2|\le\dots\le|\lambda_d|$. We investigate the Hausdorff dimension of the recurrence set
 \[R(\psi):=\{x\in\mathbb{T}^d\colon T^nx\in B(x,\psi(n)) {\rm ~for~infinitely~ many~}n\}\]
 for $\alpha\ge\log|\lambda_d/\lambda_1|$, where $\psi$ is a positive decreasing function defined on $\mathbb{N}$ and its lower order at infinity is $\alpha=\liminf\limits_{n\to\infty}\frac{-\log \psi(n)}{n}$. In the case that $A$ is diagonalizable over $\mathbb{Q}$ with integral eigenvalues, we obtain the dimension formula. 
\end{abstract}

\maketitle


\section{Introduction}

Let $(X, \mathscr{B}, T, \mu, \rho)$ be a probability measure preserving system with a compatible metric $\rho$.  We call $(X, \mathscr{B}, T, \mu, \rho)$ a metric measure preserving system (m.m.p.s.). If $(X,\rho)$ is  separable, the Poincaré recurrence theorem shows that $\mu$-a.e. $x\in X$ is recurrent, that is
\[\liminf_{n\to\infty} \rho(T^nx, x) = 0.\]
It shows nothing about the speed at which the orbit returns close to the initial point.  One of the first general quantitative recurrence results was given by Boshernitzan \cite{bos}.
\begin{theo}[\cite{bos}]\label{bos}
  Let $(X, \mathscr{B},T, \mu,\rho)$ be a m.m.p.s. Assume that for
  some $\tau>0$, the $\tau$-dimensional Hausdorff measure
  $\mathcal{H}^{\tau}$ of $X$ is $\sigma$-finite. Then for
  $\mu$-a.e.\ $x\in X$,
  \[
    \liminf_{n\to\infty}n^{\frac{1}{\tau}}\rho(T^nx,x)< \infty.
  \]
  Futhermore, if $\mathcal{H}^{\tau} (X)=0$, then for $\mu$-almost
  every $x\in X$,
  \[
    \liminf_{n\to\infty}n^{\frac{1}{\tau}}\rho(T^nx,x)=0.
  \] 
\end{theo}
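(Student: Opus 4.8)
The plan is to reduce, via the $\sigma$-finiteness hypothesis, to the case of finite Hausdorff measure, and then run a covering argument built on a quantitative first-return estimate that delivers both conclusions simultaneously. For the reduction, write $X=\bigcup_{k\ge1}X_k$ with $\mathcal H^{\tau}(X_k)<\infty$; since the conclusion is an almost-everywhere statement and a countable union of null sets is null, it suffices to treat each $X_k$ separately, and for the second assertion one has $\mathcal H^{\tau}(X)=0$ already, so there one simply takes $X_k=X$. Fix then a measurable $Y\subseteq X$ with $h:=\mathcal H^{\tau}(Y)<\infty$. The first-return estimate is as follows: for a measurable $E\subseteq X$ and an integer $M\ge1$, the set $F(E,M):=\{x\in E: T^nx\notin E\text{ for }1\le n\le M\}$ satisfies $\mu(F(E,M))\le 1/(M+1)$, because the sets $F(E,M),T^{-1}F(E,M),\dots,T^{-M}F(E,M)$ are pairwise disjoint — if $T^ay$ and $T^by$ both lie in $F(E,M)\subseteq E$ with $0\le a<b\le M$, then the defining property of $F(E,M)$ applied to $T^ay$ with the exponent $b-a\in[1,M]$ forces $T^by\notin E$, a contradiction — so $(M+1)\mu(F(E,M))\le\mu(X)=1$.

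Now fix $\delta_\ell\downarrow0$ and, for each $\ell$, a countable cover $\{E^{(\ell)}_j\}_j$ of $Y$ with $\operatorname{diam}E^{(\ell)}_j<\delta_\ell$ and $\sum_j(\operatorname{diam}E^{(\ell)}_j)^{\tau}\le h+2^{-\ell}$ (possible because $\mathcal H^{\tau}_{\delta}(Y)\le\mathcal H^{\tau}(Y)$ for every $\delta>0$). Fix a scale $c>0$, set $M^{(\ell)}_j:=\lfloor c\,(\operatorname{diam}E^{(\ell)}_j)^{-\tau}\rfloor$ — a positive integer once $\delta_\ell\le c^{1/\tau}$ — and define
\[
A_\ell:=\bigcup_j\Big\{x\in E^{(\ell)}_j:\ r^{(\ell)}_j(x)\le M^{(\ell)}_j\Big\},\qquad r^{(\ell)}_j(x):=\min\{n\ge1: T^nx\in E^{(\ell)}_j\}.
\]
Assigning to each $x\in Y$ the least index $j$ with $x\in E^{(\ell)}_j$ shows $Y\setminus A_\ell\subseteq\bigcup_j F(E^{(\ell)}_j,M^{(\ell)}_j)$, so the estimate above gives
\[
\mu(Y\setminus A_\ell)\ \le\ \sum_j\frac1{M^{(\ell)}_j+1}\ \le\ \frac1c\sum_j(\operatorname{diam}E^{(\ell)}_j)^{\tau}\ \le\ \frac{h+2^{-\ell}}{c}.
\]
The point of $A_\ell$ is that the return it furnishes is \emph{quantitatively} good: if $x\in E^{(\ell)}_j$ and $r:=r^{(\ell)}_j(x)\le M^{(\ell)}_j$, then $T^rx\in E^{(\ell)}_j$, so $\rho(T^rx,x)\le\operatorname{diam}E^{(\ell)}_j<\delta_\ell$ and $r^{1/\tau}\rho(T^rx,x)\le\big(c(\operatorname{diam}E^{(\ell)}_j)^{-\tau}\big)^{1/\tau}\operatorname{diam}E^{(\ell)}_j=c^{1/\tau}$.

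Let $A_\infty:=\limsup_\ell A_\ell$; Fatou's lemma gives $\mu(Y\setminus A_\infty)\le\liminf_\ell\mu(Y\setminus A_\ell)\le h/c$. For $x\in A_\infty$ there are, by the previous paragraph, infinitely many $\ell$ and integers $n_\ell\ge1$ with $n_\ell^{1/\tau}\rho(T^{n_\ell}x,x)\le c^{1/\tau}$ and $\rho(T^{n_\ell}x,x)<\delta_\ell$. If $x$ is periodic, then $\liminf_n n^{1/\tau}\rho(T^nx,x)=0$ at once; otherwise $n_\ell\to\infty$ along these $\ell$ — a constant subsequence $n_\ell\equiv p$ together with $\rho(T^px,x)<\delta_\ell\to0$ would force $T^px=x$ — so there are times tending to infinity along which $n^{1/\tau}\rho(T^nx,x)\le c^{1/\tau}$, whence $\liminf_n n^{1/\tau}\rho(T^nx,x)\le c^{1/\tau}<\infty$. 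Letting $c$ range over $\mathbb N$, and then taking the union over $k$, proves the first assertion. For the second ($h=0$) one repeats the construction with $M^{(\ell)}_j:=\lfloor\varepsilon_\ell^{\tau}(\operatorname{diam}E^{(\ell)}_j)^{-\tau}\rfloor$ for a fixed $\varepsilon_\ell\downarrow0$; since the covers may now be chosen with $\sum_j(\operatorname{diam}E^{(\ell)}_j)^{\tau}$ arbitrarily small, one arranges $\mu(Y\setminus A_\ell)\le2^{-\ell}$, Borel--Cantelli then makes $A_\infty$ of full measure, and the same dichotomy produces times $\to\infty$ along which $n^{1/\tau}\rho(T^nx,x)\le\varepsilon_\ell\to0$, i.e.\ $\liminf_n n^{1/\tau}\rho(T^nx,x)=0$ almost everywhere.

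The argument is self-contained — it re-proves the instance of Poincar\'e recurrence it uses — so the obstacle is not a missing ingredient but the care required in the bookkeeping: verifying that the thresholds $M^{(\ell)}_j$ are genuine positive integers so that the disjointness lemma applies verbatim; choosing the index map $x\mapsto j$ measurably so that $A_\ell$ is measurable and the union bound over $j$ is legitimate; and checking that in the $\sigma$-finite reduction a cover of $Y=X_k$ whose pieces need not be contained in $X_k$ still controls $\mathcal H^{\tau}(X_k)$. Each of these is routine, but they are the points where a hasty write-up would stumble.
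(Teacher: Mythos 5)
The paper does not prove this statement at all: Theorem \ref{bos} is quoted verbatim from Boshernitzan's paper \cite{bos} as background, so there is no in-paper argument to compare yours against. Judged on its own, your proof is correct and is essentially a self-contained reconstruction of the known covering/return-time argument behind Boshernitzan's theorem. The pivotal ingredient, the bound $\mu(F(E,M))\le 1/(M+1)$ obtained from the pairwise disjointness of $F(E,M),T^{-1}F(E,M),\dots,T^{-M}F(E,M)$, is verified correctly, and the calibration $M^{(\ell)}_j=\lfloor c\,(\operatorname{diam}E^{(\ell)}_j)^{-\tau}\rfloor$ does exactly what you claim: any return within $M^{(\ell)}_j$ steps to a set of diameter $d_j$ gives $r^{1/\tau}\rho(T^rx,x)\le c^{1/\tau}$, while the union bound costs only $c^{-1}\sum_j d_j^{\tau}$. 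The Fatou/Borel--Cantelli passage to $\limsup_\ell A_\ell$, the periodic/non-periodic dichotomy forcing the good times to tend to infinity, the $c\to\infty$ (resp.\ $\varepsilon_\ell\downarrow 0$) limit, and the $\sigma$-finite reduction (which indeed needs no $T$-invariance of the pieces $X_k$, since the return-time lemma lives on all of $X$) are all sound. Two of your ``bookkeeping'' worries can be settled more simply than you suggest: no measurable index map is needed, since $A_\ell$ is a countable union of sets $\{x\in E^{(\ell)}_j:\ T^nx\in E^{(\ell)}_j \text{ for some }1\le n\le M^{(\ell)}_j\}$, which are measurable as soon as the covering sets are; and measurability of the covering sets is arranged by replacing each $E^{(\ell)}_j$ with its closure, which changes no diameter. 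The only unaddressed micro-case is covering elements of diameter zero, for which $M^{(\ell)}_j$ is undefined; these are singletons, and points of positive-measure singletons are a.e.\ periodic by Poincar\'e recurrence while the remaining ones form a null set, so they do not affect the conclusion.
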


Later, Barreira and Saussol \cite{BS} showed that the exponent $\tau$ in Theorem \ref{bos} could be replaced by the lower local dimension of a measure at $x$. This leads us to study the size of recurrence set when the rate of recurrence is replaced by a general function. Define recurrence sets  as 
\[
  R(\psi) = \{\, x\in X : T^n(x) \in B(x,\psi(n)) \text{ for infinitely   many } n\ge1 \, \},
\]
where 
$\psi\colon \mathbb{N} \to \mathbb{R}^+$ is a positive  decreasing function, and $B(x,\psi(n))$ denotes the ball centred at  $x$ with radius $\psi(n)$.

For  the measure of $R(\psi)$, 
Chang, Wu and Wu \cite{CWW} investigated the case when $X$ is a homogeneous self-similar set satisfying strong separation condition, and obtain results on the Hausdorff measure of $ R(\psi)$.  Similar results were generalised to finite conformal iterated function system satisfying the open set condition by Baker and Farmer\cite{BF}. Later, Hussain et.al \cite{hlsw} considered more general conformal dynamical systems. When $T$ is a piecewise expanding map, under some conditions, He and Liao \cite{hl} obtained that the measure of $R(\psi)$ obeys a full-zero law.  More results about measure of $ R(\psi)$ can be found in papers by \cite{ abb, kz,kkp}.
As for the size of  $ R(\psi)$ in Hausdorff dimension, Tan and Wang \cite{TW} calculated the Hausdorff dimension of $ R(\psi)$ when $T$ is the $\beta$-transformation. Seuret and Wang \cite{SW} proved similar results for self-conformal sets. There are very few results on the Hausdorff dimension when $T$ is the matrix transformation,  and we can refer to \cite{hl} for diagonal matrix transformations, and to \cite{HuPer} for cat maps. It's also investigated in several special cases such as \cite{wy,yl}. 

Shrinking target problem concerns the speed at which  $\{T^nx\}_{n\ge1}$ returns to the neighborhood of a given point $x_0$ instead of the initial point $x$, which has many common features with the problem of quantitative recurrence. For the shrinking target problem, much more results are known.  Hill and Velani \cite{HV} investigated the Hausdorff dimension of shrinking target sets in the system $(X, T )$ with $X$ a $d$-dimensional torus, and $T$ a linear map given by an integer matrix.  For a  real, non-singular matrix transformation, under some conditions,  Li 
et al. \cite{llvz} proved that the Lebesgue measure of the shrinking target set obeys a zero-one law. They also determined the Hausdorff dimension of  shrinking targets set When $T$ is a diagonal matrix transformation. One can refer to \cite{CK, FMP} for more results on the measure, and to \cite{HV95, lwwx} for the dimension.

Motivated by the aforementioned research, in this paper, we focus on the case where $X$ is the $d$-dimensional torus $\mathbb{T}^d$ endowed with the usual quotient distance $\rho_0$,  and $T$ is the integer matrix transformation with the modulus of eigenvalues are strictly larger than 1. More precisely, $T\colon\mathbb{T}^d\to \mathbb{T}^d$ is defined by
$$Tx=Ax\pmod 1,$$ where $A
$ is  a $d\times d$ integer matrix. Let $\psi\colon \mathbb{N} \to \mathbb{R}^+$ be a positive function satisfying $\psi(n)\to0$ as $n\to\infty$. Throughout, denote  the lower order of $\psi$ at infinity by
\[
  \alpha:= \liminf_{n\to\infty} \frac{-\log \psi(n)}{n}.
\]

In this paper, $\dim_{\rm H}$ stands for the Hausdorff dimension.
 
 \begin{theo}\label{theorem1}
 Let $A$ be a $d\times d$  integer matrix with all eigenvalues $\lambda_1,~\lambda_2,\dots,~\lambda_d$. Assume that $|\lambda_d|\ge\dots\ge|\lambda_1|>1$. Then 
 for $\alpha\ge\log|\lambda_d/\lambda_1|$, 
 \[\dim_{\rm H}R(\psi)=\min_{j\in\{1,\dots,d\}}\Big\{\frac{j\log|\lambda_j|+\sum_{i=j+1}^d\log|\lambda_i|}{\alpha +\log|\lambda_j|}\Big\}.\]
 \end{theo}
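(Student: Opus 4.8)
The plan is to establish matching upper and lower bounds for $\dim_{\rm H} R(\psi)$, working first in the case where $A$ is diagonalizable over $\mathbb{Q}$ with integer eigenvalues (so that, up to a rational change of coordinates preserving $\mathbb{T}^d$, we may take $A=\mathrm{diag}(\lambda_1,\dots,\lambda_d)$), and then reducing the general case to this one via the theory of integer matrices, Jordan-type normal forms over $\mathbb{Q}$, and control of the non-diagonalizable blocks. The geometric heart of the problem is that the condition $T^n x\in B(x,\psi(n))$ forces $\|(A^n-I)x\|<\psi(n)$ modulo $\mathbb{Z}^d$, i.e. $x$ lies within $\psi(n)$ of the finite set $(A^n-I)^{-1}\mathbb{Z}^d$ of rational points; since $A^n-I$ has determinant $\prod_i(\lambda_i^n-1)\asymp e^{n\sum_i\log|\lambda_i|}$, these are roughly $e^{n\sum_i\log|\lambda_i|}$ points, but they are \emph{not} uniformly spread: in the eigendirection of $\lambda_j$ the spacing is $\asymp|\lambda_j|^{-n}$, so near each such point $R(\psi)$-relevant coverings are by \emph{anisotropic} boxes of dimensions $\psi(n)\times\cdots$ in the contracting-looking directions and $|\lambda_j|^{-n}$ in the others.

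\textbf{Upper bound.} I would fix $j$ and cover $R(\psi)$ using the natural limsup structure: for each $n$, $R(\psi)\subseteq\bigcup_{p}\big(B(p,\psi(n))\cap\{\text{lattice slab}\}\big)$ where $p$ ranges over representatives of $(A^n-I)^{-1}\mathbb{Z}^d$ in $\mathbb{T}^d$. Because the rational points $(A^n-I)^{-1}\mathbb{Z}^d$ sit on a lattice whose successive minima in the eigen-coordinates are $\asymp|\lambda_1|^{-n},\dots,|\lambda_d|^{-n}$, a ball $B(p,\psi(n))$ of radius $\psi(n)\le|\lambda_1|^{-n}$ (which holds once $\alpha>\log|\lambda_1|$, guaranteed by $\alpha\ge\log|\lambda_d/\lambda_1|$ together with... actually one must be a little careful here and split according to how $\psi(n)$ compares to each $|\lambda_i|^{-n}$) meets only boundedly many of them, so for each scale I get a cover by $\asymp\prod_{i:|\lambda_i|^{-n}\le\psi(n)}(\psi(n)|\lambda_i|^n)\cdot\prod_{i:|\lambda_i|^{-n}>\psi(n)}1$ balls — no wait, the cleanest bookkeeping is: the relevant points, after chopping each ball into cubes of the smallest relevant side, yield a cover by cubes of side $\psi(n)$ of cardinality $\asymp\prod_{i}\max(1,\psi(n)|\lambda_i|^n)$. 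Summing $(\text{side})^s$ over $n$ along a subsequence realizing the liminf $\alpha$, the series $\sum_n \psi(n)^s\prod_i\max(1,\psi(n)|\lambda_i|^n)$ converges precisely when $s>\min_j\frac{j\log|\lambda_j|+\sum_{i>j}\log|\lambda_i|}{\alpha+\log|\lambda_j|}$ — the index $j$ is exactly the threshold separating directions in which $\psi(n)<|\lambda_j|^{-n}$ from those in which $\psi(n)\ge|\lambda_j|^{-n}$, asymptotically along the optimal subsequence. This gives the upper bound by the Hausdorff–Cantelli lemma.

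\textbf{Lower bound.} This is where I expect the main work. I would construct a Cantor-type subset of $R(\psi)$ concentrated along a subsequence $(n_k)$ with $-\log\psi(n_k)/n_k\to\alpha$, and apply the mass distribution principle (or a version of Falconer's inhomogeneous/anisotropic Frostman argument, cf. the methods in \cite{TW,SW,HV}). At stage $k$ one selects, inside each surviving cube of the previous stage, a large subfamily of the rational points $(A^{n_k}-I)^{-1}\mathbb{Z}^d$ lying in it, and around each keeps a box of dimensions $\asymp\psi(n_k)$ in the $j$ ``wide'' eigendirections and $\asymp|\lambda_i|^{-n_k}$ in the others — these boxes genuinely lie in $R(\psi)$ at level $n_k$. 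The number of such boxes inside a parent cube, and their anisotropic shapes, must be balanced so that the natural measure satisfies $\mu(B(x,r))\lesssim r^{s}$ for every $s<\min_j(\cdots)$; the anisotropy forces one to estimate $\mu$ of balls at all intermediate scales between consecutive $|\lambda_i|^{-n_k}$, which is the standard but delicate ``covering a ball by boxes'' computation. I also need the spacing/counting lemma: $(A^{n}-I)^{-1}\mathbb{Z}^d$ is equidistributed enough that every cube of side $\gg|\lambda_d|^{-n}$ contains the expected number of these points — this follows from elementary lattice-point counting once $A^n-I$ is inverted, using $\gcd$ bounds on the $\lambda_i^n-1$ in the diagonal case, and is the place where diagonalizability-over-$\mathbb{Q}$ is genuinely used. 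The condition $\alpha\ge\log|\lambda_d/\lambda_1|$ enters to guarantee $\psi(n)\lesssim|\lambda_1|^{-n}$ eventually, i.e. that even the slowest-expanding direction is already in the ``narrow'' regime, so that the boxes at consecutive levels nest correctly and the construction does not collapse. The main obstacle, then, is the simultaneous control of the anisotropic geometry across scales together with the arithmetic equidistribution of the rational approximants; handling the non-diagonalizable general matrix adds polynomial correction factors from the Jordan blocks which are harmless on the exponential scale but require care in the counting lemma.
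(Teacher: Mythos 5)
Your overall architecture (natural covers for the upper bound, a Cantor subset plus the mass distribution principle along a sparse subsequence for the lower bound) matches the paper's, but the geometric model you work with is wrong, and this breaks both halves. Writing $R_n(\psi)=\bigcup_{p\in\mathcal{P}_n}\big(p+(A^n-I)^{-1}B(0,\psi(n))\big)$, each component is (up to polynomial Jordan-block factors) an ellipsoid whose semi-axes are $\asymp\psi(n)|\lambda_i^n-1|^{-1}\asymp\psi(n)|\lambda_i|^{-n}$ in \emph{every} eigendirection; it is not a ball of radius $\psi(n)$ around $p$, nor a box with some sides $\asymp\psi(n)$ and the others $\asymp|\lambda_i|^{-n}$. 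Consequently the boxes you keep in your Cantor construction (``$\asymp\psi(n_k)$ in the $j$ wide directions and $\asymp|\lambda_i|^{-n_k}$ in the others'') do \emph{not} lie in $R(\psi)$ at level $n_k$: if the $i$-th coordinate of $x-p$ has size $\asymp\psi(n_k)$, then $|(\lambda_i^{n_k}-1)(x-p)_i|\asymp\psi(n_k)|\lambda_i|^{n_k}\gg\psi(n_k)$ and the recurrence condition fails, so your candidate set is not a subset of $R(\psi)$. The correct building blocks are the ellipsoids themselves, and the counting input needed is the number of periodic points of period $m$ inside one ellipsoid of level $n$ ($\asymp\psi(n)^d H_m H_n^{-1}$), which is elementary lattice-point counting for $(A^n-I)B$ and requires no diagonalizability over $\mathbb{Q}$; your plan to first treat the diagonal integer-eigenvalue case and then ``reduce'' a general integer matrix to it cannot work (that situation is the hypothesis of Theorem~\ref{theorem2}, not of Theorem~\ref{theorem1}; a general integer matrix admits no such rational conjugation), whereas the general case is handled directly with the Jordan-form polynomial corrections absorbed as harmless factors.

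The upper bound as written also fails. Covering at the single scale $\psi(n)$ with ``cardinality $\asymp\prod_i\max(1,\psi(n)|\lambda_i|^n)$'' is doubly off: that product counts lattice points per cube, not cubes (the number of side-$\psi(n)$ cubes meeting $R_n(\psi)$ is $\asymp\prod_i\min(|\lambda_i|^n,\psi(n)^{-1})$), and in either case the convergence exponent of your series is of the form $\alpha^{-1}\sum_i(\cdots)$, which is not the claimed minimum: for $d=1$, $\lambda=e^2$, $\alpha=1$ your series threshold is $1$ while the formula gives $2/3$, so the assertion ``converges precisely when $s>\min_j(\cdots)$'' is false. The minimum over $j$ does not arise from thresholding $\psi(n)$ against $|\lambda_i|^{-n}$; it arises from covering each ellipsoid by balls of radius equal to its $k$-th semi-axis $\psi(n)|\lambda_k^n-1|^{-1}$ for each $k=1,\dots,d$ (needing $\asymp\prod_{j\le k}\frac{|\lambda_k^n-1|}{|\lambda_j^n-1|}$ balls per ellipsoid) and then optimizing over $k$. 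Finally, you misidentify the role of the hypothesis $\alpha\ge\log|\lambda_d/\lambda_1|$: it does not give $\psi(n)\lesssim|\lambda_1|^{-n}$ (take all $|\lambda_i|$ equal and $\alpha$ small); what it gives is that the diameter of one ellipsoid, $\asymp\psi(n)|\lambda_1|^{-n}$, is dominated by the separation $\gtrsim|\lambda_d|^{-n}$ between distinct ellipsoids, so that a ball of the relevant radius meets at most one ellipsoid of a given level; this is exactly what makes the covering count sharp and the local measure estimates in the Cantor construction go through, and its failure for small $\alpha$ is why the formula changes in Theorem~\ref{theorem2}.
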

 
\begin{rem}
Let $A$ be as in Theorem \ref{theorem1}. For $\alpha\ge0$, we always have
\[\dim_{\rm H}R(\psi)\le\min_{j\in\{1,\dots,d\}}\Big\{\frac{j\log|\lambda_j|+\sum_{i=j+1}^d\log|\lambda_i|}{\alpha +\log|\lambda_j|}\Big\},\]
here we do not need assume that $\alpha\ge\log|\lambda_d/\lambda_1|$.  Notice that for $\alpha>0$, $\dim_{\rm H}R(\psi)\le \frac{d\log|\lambda_d|}{\alpha +\log|\lambda_d|}<d$.
\end{rem}

As an immediate consequence of Theorem \ref{theorem1}, when the modulus of all eigenvalues of $A$ are same, we obtain the formula of the Hausdorff dimension of $R(\psi)$ for any $\alpha\ge 0$.
\begin{cor}
 Let $A$ be a $d\times d$  integer matrix with all eigenvalues  $\lambda_1,~\lambda_2,\dots,~\lambda_d$. Assume that the modulus of all eigenvalues are same, denoted by $\lambda$, and $\lambda>1$.  Then  
 for $\alpha\ge 0$, 
 \[\dim_{\rm H}R(\psi)=\frac{d\log\lambda}{\alpha +\log\lambda}.\]

\end{cor}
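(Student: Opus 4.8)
The plan is to deduce the corollary directly from Theorem \ref{theorem1} by examining what the minimum becomes when all eigenvalues share the same modulus. Since $|\lambda_1|=|\lambda_2|=\dots=|\lambda_d|=\lambda$, the hypothesis $\alpha\ge\log|\lambda_d/\lambda_1|=\log 1=0$ is automatically satisfied for every $\alpha\ge0$, so Theorem \ref{theorem1} applies with no further restriction on $\psi$. It then remains only to simplify the expression
\[
  \min_{j\in\{1,\dots,d\}}\Big\{\frac{j\log|\lambda_j|+\sum_{i=j+1}^d\log|\lambda_i|}{\alpha+\log|\lambda_j|}\Big\}.
\]

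First I would substitute $|\lambda_j|=\lambda$ and $|\lambda_i|=\lambda$ into each term of the family. For a fixed $j$, the numerator is $j\log\lambda+(d-j)\log\lambda=d\log\lambda$ and the denominator is $\alpha+\log\lambda$, so every term in the minimum equals $\frac{d\log\lambda}{\alpha+\log\lambda}$, independently of $j$. Hence the minimum over $j\in\{1,\dots,d\}$ is just this common value, which gives $\dim_{\rm H}R(\psi)=\frac{d\log\lambda}{\alpha+\log\lambda}$ as claimed.

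There is essentially no obstacle here: the corollary is a one-line specialization of Theorem \ref{theorem1}, and the only thing to check carefully is that the lower-order hypothesis degenerates to $\alpha\ge0$ when the moduli coincide. One could additionally remark, for context, that since $\lambda>1$ this dimension is strictly less than $d$ whenever $\alpha>0$ and equals $d$ when $\alpha=0$, consistent with the Poincaré recurrence theorem (for $\alpha=0$ the recurrence set has full measure). I would keep the written proof to just these few sentences.
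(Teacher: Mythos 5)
Your proposal is correct and matches the paper's own treatment: the corollary is stated there as an immediate consequence of Theorem \ref{theorem1}, obtained exactly as you do by noting that equal moduli force $\log|\lambda_d/\lambda_1|=0$ and that every term in the minimum collapses to $\frac{d\log\lambda}{\alpha+\log\lambda}$.
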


When an integer matrix $A$ is diagonalizable over $\mathbb{Z}$ or $A$ is a  diagonal real matrix with eigenvalues $\lambda_d\ge\dots\ge\lambda_1>1$, Yuan and Wang \cite{yw} calculated the Hausdorff dimension of $R(\psi)$.
However they did not consider the case when $\lambda_i<-1$ for some $1\le i\le d$. When $A$ is a  diagonal real matrix with $|\lambda_d|\ge\dots\ge|\lambda_1|>1$, He and Liao \cite{hl} gave the formula of $\dim_{\rm H}R(\psi)$. If $A$ is an integer matrix, the diagonal assumption can be relaxed to a weaker condition as the following theorem shows. 

\begin{theo}\label{theorem2}
  Let $A$ be a $d\times d$  integer matrix. 
    Assume that $A$ is diagonalizable over $\mathbb{Q}$ with all eigenvalues $\lambda_1,~\lambda_2,\dots,~\lambda_d\in\mathbb{Z}$. Assume that $|\lambda_d|\ge\dots\ge|\lambda_1|>1$. Then  for $\alpha\ge 0$,
\[\dim_{\rm H}R(\psi)=\min_{j\in\{1,\dots,d\}}\Big\{\frac{j\log|\lambda_j|+\sum_{k\in\mathcal{K}(j)}(\alpha+\log|\lambda_j|-\log|\lambda_i|)+\sum_{i=j+1}^d\log|\lambda_i|}{{\log|\lambda_j|+\alpha}}\Big\},\]
where 
\[\mathcal{K}(j):=\{i\in\{1,\dots,d\}\colon \log|\lambda_i|>\log|\lambda_j|+\alpha\}.\]
 \end{theo}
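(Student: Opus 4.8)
\emph{Strategy.} The plan is to reduce Theorem~\ref{theorem2} to the case where the matrix is diagonal, which was settled by He and Liao \cite{hl}, by passing through a rational change of coordinates and transporting the dimension formula along the resulting finite semiconjugacy. Since $A$ is diagonalizable over $\mathbb{Q}$ with integer eigenvalues, write $A=PDP^{-1}$ with $D=\mathrm{diag}(\lambda_1,\dots,\lambda_d)$ and $P\in GL_d(\mathbb{Q})$; after multiplying $P$ by a common denominator of its entries we may take $P\in M_d(\mathbb{Z})$, so that $AP=PD$ with $P$ and $D$ both integral, and we fix $N\in\mathbb{N}$ for which $Q:=NP^{-1}\in M_d(\mathbb{Z})$, so that $QA=DQ$. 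The matrices $P$ and $Q$ induce surjective toral endomorphisms $\pi_P,\pi_Q\colon\mathbb{T}^d\to\mathbb{T}^d$ (by $x\mapsto Px\bmod1$ and $x\mapsto Qx\bmod1$); because $\det P,\det Q\ne0$ each is a finite-to-one covering map, hence globally Lipschitz and locally bi-Lipschitz, and from $AP=PD$, $QA=DQ$ one gets the intertwinings $\pi_P\circ T_D=T_A\circ\pi_P$ and $\pi_Q\circ T_A=T_D\circ\pi_Q$, where $T_A,T_D$ denote the toral maps of $A,D$. For such maps $\hdim\pi_P^{-1}(E)=\hdim\pi_Q^{-1}(E)=\hdim E$ for every $E\subseteq\mathbb{T}^d$: the inequality ``$\le$'' follows by covering the preimage with finitely many bi-Lipschitz charts, and the reverse because $\pi_P,\pi_Q$ are onto and Lipschitz.

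\emph{Transporting the recurrence sets.} Let $R_A(\phi),R_D(\phi)$ denote the recurrence sets of $T_A,T_D$ for a target $\phi$, so that $R_A(\psi)=R(\psi)$. If $\rho_0(T_D^ny,y)<\phi(n)$ for some $n$, choose a lift $\tilde y$; then $\|D^n\tilde y-\tilde y-m_n\|<\phi(n)$ for some $m_n\in\mathbb{Z}^d$, and applying $P$, using $A^n(P\tilde y)-(P\tilde y)-Pm_n=P(D^n\tilde y-\tilde y-m_n)$ together with $Pm_n\in\mathbb{Z}^d$, we get $\rho_0(T_A^n\pi_P(y),\pi_P(y))\le\|P\|\,\phi(n)$ for the same $n$. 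Hence
\[
R_D(\phi)\subseteq\pi_P^{-1}\bigl(R_A(\|P\|\phi)\bigr),\qquad R_A(\phi)\subseteq\pi_Q^{-1}\bigl(R_D(\|Q\|\phi)\bigr),
\]
the second inclusion by the symmetric argument through $Q$. Write $s(\alpha)$ for the right-hand side of Theorem~\ref{theorem2}; for a diagonal matrix this is exactly He--Liao's formula, and it depends on $\psi$ only through $\alpha$, so since multiplying a target by a positive constant preserves its lower order we have $\hdim R_D(c\psi)=s(\alpha)$ for every constant $c>0$. Taking $\phi=\psi$ in the second inclusion gives $\hdim R_A(\psi)\le\hdim\pi_Q^{-1}(R_D(\|Q\|\psi))=\hdim R_D(\|Q\|\psi)=s(\alpha)$, while taking $\phi=\|P\|^{-1}\psi$ in the first inclusion gives $s(\alpha)=\hdim R_D(\|P\|^{-1}\psi)\le\hdim\pi_P^{-1}(R_A(\psi))=\hdim R_A(\psi)$. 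Together these yield $\hdim R_A(\psi)=s(\alpha)$.

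\emph{Where the difficulty lies.} The reduction itself is essentially bookkeeping: one only has to verify that $\pi_P,\pi_Q$ are genuine covering maps, so that their local bi-Lipschitz constants are uniform over $\mathbb{T}^d$, and to keep track of how $\rho_0$ transforms when passing between lifts. All the substance sits in the diagonal case, which we quote from \cite{hl}. If one wished to be self-contained, one would prove the diagonal formula directly: the upper bound comes from the natural covers of $R_D(\psi)$ — for each $j$, cover the set of points resonant at time $n$ by cubes of side $\psi(n)|\lambda_j|^{-n}$ and count coordinate by coordinate, noting that in a coordinate $i$ with $\log|\lambda_i|>\alpha+\log|\lambda_j|$ a single such cube already meets a definite proportion of the $|\lambda_i|^n$ resonant slabs, which is precisely what produces the correction terms indexed by $\mathcal{K}(j)$ — and then optimize over $j$. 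The matching lower bound would be the main obstacle: it calls for constructing a Cantor subset of $R_D(\psi)$ that is ``full'' in the slowly expanding coordinates and carries a measure whose local scaling exponent equals $s(\alpha)$, followed by the mass distribution principle. Invoking \cite{hl} is what lets us bypass this step.
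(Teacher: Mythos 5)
Your proposal is correct and follows essentially the same route as the paper: clear denominators to get an integer conjugating matrix, use the induced finite-to-one, locally bi-Lipschitz toral endomorphism to transport the recurrence sets (with the target rescaled by a harmless constant, which leaves $\alpha$ unchanged), and then quote He--Liao \cite{hl} for the diagonal case. The only cosmetic difference is that you run the comparison through two semiconjugacies $\pi_P$ and $\pi_Q=N P^{-1}$ with two one-sided inclusions, whereas the paper uses the single map $f(x)=\widetilde{P}x \pmod 1$ and sandwiches $f(B(0,\psi(n)))$ between balls of comparable radii; the substance is identical.
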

 
 \begin{rem}
 The dimension formula in Theorem \ref{theorem2} is same as those given by \cite{hl,yw}.
When $\alpha\ge\log|\lambda_d/\lambda_1|$, $\bigcup_j\mathcal{K}(j)=\varnothing$, hence in this case, the formula of Hausdorff dimension in Theorem \ref{theorem1} coincides with that given in Theorem \ref{theorem2}. 
 \end{rem}

 For $n\ge1$, write
\[R_n(\psi)=\{x\in\T^d : (A^n-I)x \pmod 1 \in B(0,\psi(n)) \, \},\]
where $I$ is the identity matrix. Then $R(\psi)=\limsup\limits_{n\to\infty}R_n(\psi)$.

The paper is organized as follows. 
The Theorem \ref{theorem1} is proved in the following two sections.  Section 2 is devoted to give some preparations on the geometry property of $R_n(\psi)$, which is crucial to the proof of Theorem \ref{theorem1}. In Section 3, we use the preparations to construct a Cantor subset to establish the lower bound of $\dim_{\rm H}R(\psi)$. We only prove Theorem \ref{theorem1} in the case that $|\lambda_i|>1$, $1\le i\le d$, otherwise we can consider $A^2$ instead of $A$. In the last section, we give the proof of Theorem \ref{theorem2}.

\begin{nota}
Write $f\lesssim g$ if $f\le Cg$ for some constant $C>0$, and $f\asymp g$ if $f\lesssim g$ and $g\lesssim f$.
The ceiling function of a real number $x$ is denoted by $\lceil x \rceil$, and the floor function is denoted by $\lfloor x\rfloor$.
For a matrix $A$, let $\det A$ stand for the determinant of  $A$. We write $\# E$ for the cardinality of a finite set $E$.
\end{nota}



\section{
Distribution of periodic points}
Throughout this paper, let the eigenvalues of the matrix $A$ be $\{\lambda_1,\lambda_2,\dots,\lambda_d\}$ with $|\lambda_1|\le |\lambda_2|\le\cdots\le|\lambda_d|$.
\begin{defi}\label{perpoint}
A point $x\in\T^d$ is called a {\bf periodic point}
with period $n$ if
\begin{equation} \label{equation:periodic}
  (A^n-I)x~(\rm mod~1)=0.
\end{equation}
\end{defi}
It is easy to see that \eqref{perpoint} is equivalent to $A^nx=x~(\rm mod ~1)$.
Put
$$\mathcal{P}_n=\{x\in\T^d\colon ~x 
\text{ ~is a periodic point with period~} n\}.$$

\begin{lem}[Lemma 2.3 in \cite{EW99}]\label{number-periodic}
Let $M$ be a non-singular integer matrix. If no eigenvalue of $M$ is a root of unity, the number of periodic points with period $n$ is given by
\[\#\{x\in\mathbb{T}^d\colon M^n x({\rm mod~1})=x\}=|\det (M^n-I)|.\] 
\end{lem}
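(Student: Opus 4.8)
The plan is to reduce the count of periodic points to a lattice-index computation. Write $B:=M^n$, which is again a non-singular integer matrix. Since no eigenvalue $\lambda$ of $M$ is a root of unity, in particular $\lambda^n\neq 1$ for every eigenvalue, so $1$ is not an eigenvalue of $B$; hence $B-I$ is a non-singular integer matrix with $\det(B-I)\neq 0$. This is the one place where the root-of-unity hypothesis is used.

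Next I would identify the solution set with a quotient of lattices. Lifting to $\R^d$, a point $x\in\T^d$ satisfies $M^nx\equiv x\pmod 1$, i.e.\ $(B-I)x\equiv 0\pmod 1$, if and only if any representative $\tilde x\in\R^d$ of $x$ lies in $(B-I)^{-1}\Z^d$. Since $(B-I)^{-1}\Z^d\supseteq\Z^d$ (because $B-I$ has integer entries), this gives a bijection
\[
\{x\in\T^d\colon M^nx\ (\mathrm{mod}\ 1)=x\}\;\longleftrightarrow\;\bigl((B-I)^{-1}\Z^d\bigr)\big/\Z^d,
\]
so the number of periodic points of period $n$ equals the index $[(B-I)^{-1}\Z^d:\Z^d]$. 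Applying the isomorphism induced by $B-I$ to both lattices, this index equals $[\Z^d:(B-I)\Z^d]$.

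Finally I would evaluate $[\Z^d:(B-I)\Z^d]$ via the Smith normal form: write $B-I=U D V$ with $U,V\in\mathrm{GL}_d(\Z)$ and $D=\mathrm{diag}(d_1,\dots,d_d)$, $d_i\in\Z\setminus\{0\}$. Then $\Z^d/(B-I)\Z^d\cong\bigoplus_{i=1}^d\Z/d_i\Z$, so
\[
\#\mathcal P_n=[\Z^d:(B-I)\Z^d]=\prod_{i=1}^d|d_i|=|\det D|=|\det(B-I)|=|\det(M^n-I)|,
\]
which is the assertion.

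I do not expect a genuine obstacle here: the argument is the standard correspondence between finite-index sublattices of $\Z^d$ and finite abelian groups through the Smith normal form, and the only subtlety is invoking the hypothesis to guarantee $\det(M^n-I)\neq 0$ so that the index is finite and the formula is meaningful.
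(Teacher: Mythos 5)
Your proof is correct: the paper itself offers no argument for this lemma (it simply cites Lemma 2.3 of Everest--Ward), and your reduction of the fixed-point count to the lattice index $[\Z^d:(M^n-I)\Z^d]=|\det(M^n-I)|$ via Smith normal form is the standard proof of that cited fact, with the root-of-unity hypothesis used exactly where it is needed to make $M^n-I$ non-singular.
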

Denote $|{\rm det} (A^n-I)|$ by $H_n$. It follows from Lemma \ref{number-periodic} that
  $$\#\mathcal{P}_n=H_n=\prod_{{\it i}=1}^{\it d}|{\it \lambda_i^n}-1|.$$

Rewrite
\begin{equation}\label{trans}
\begin{split}
R_n(\psi) &=\{x\in\mathbb{T}^d\colon x\in(A^n-I)^{-1}B(0,\psi(n))+(A^n-I)^{-1}\mathbb{Z}^d\}\\
&=\bigcup_{y\in\mathcal{P}_n}\{x\in\mathbb{T}^d\colon x\in(A^n-I)^{-1}B(0,\psi(n))+y\}.
\end{split}
\end{equation}

By \eqref{trans} and Lemma \ref{number-periodic}, $R_n(\psi) $ consists of $H_n$ ellipsoids which are translations of $(A^n-I)^{-1}B(0,\psi(n))$, denoted by $\{R_{n,i}\}_{i=1}^{H_n}$.
For $n\ge1$ and $j=1,2,\dots,d$, put $$\ell_{n,j}=2\psi(n)|\lambda_j^n-1|^{-1}.$$ Let $e_{n,1} \ge e_{n,2}\ge\dots\ge e_{n,d}$ be the lengths of semi-axes of the ellipse $R_{n,i}$. If $A$ is diagonalisable, for $j=1,\dots,d$
$$e_{n,j}\asymp\ell_{n,j}.$$
 If $A$ is not diagonalisable, 
using Jordan decomposition, we have the following lemma.
\begin{lem}\label{singular and eigen}
Let $A$ be a non-singular integer matrix. Assume that  the modulus of all eigenvalues are not 1. Then there are constants $C > 1$ and $\tau>0$ such that
\[C^{-1}n^{-\tau} \le \frac{e_{n,j}}{\ell_{n,j}}  \le C n^{\tau}\]
holds for all $n$ and $1\le j\le d$.
\end{lem}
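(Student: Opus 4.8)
I will reduce the statement to a comparison between the singular values of $A^{n}-I$ and the numbers $|\lambda_j^{n}-1|$, and prove that comparison using the Jordan form of $A$. Recall from \eqref{trans} that each ellipsoid $R_{n,i}$ is a translate of $(A^{n}-I)^{-1}B(0,\psi(n))$; writing the singular values of $A^{n}-I$ in increasing order $\sigma_1(A^{n}-I)\le\cdots\le\sigma_d(A^{n}-I)$, the semi-axes of $R_{n,i}$ have lengths $e_{n,j}=\psi(n)/\sigma_j(A^{n}-I)$, so that
\[\frac{e_{n,j}}{\ell_{n,j}}=\frac{|\lambda_j^{n}-1|}{2\,\sigma_j(A^{n}-I)}.\]
Hence it suffices to find $C'>1$ and $\tau>0$ with $C'^{-1}n^{-\tau}\le \sigma_j(A^{n}-I)/|\lambda_j^{n}-1|\le C'n^{\tau}$ for all $n$ and all $j$. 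I will also use the elementary estimate $|\lambda_j^{n}-1|\asymp 1+|\lambda_j|^{n}$ with constants independent of $n$: since no $|\lambda_j|$ equals $1$, one checks $\big||\lambda_j|^{n}-1\big|\ge \tfrac{|\lambda_j|-1}{|\lambda_j|+1}(1+|\lambda_j|^{n})$ if $|\lambda_j|>1$ and $\ge\tfrac{1-|\lambda_j|}{1+|\lambda_j|}(1+|\lambda_j|^{n})$ if $|\lambda_j|<1$, while trivially $|\lambda_j^{n}-1|\le 1+|\lambda_j|^{n}$. In particular $j\mapsto|\lambda_j^{n}-1|$ is, up to a fixed multiplicative constant, non-decreasing under the ordering $|\lambda_1|\le\cdots\le|\lambda_d|$.

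Next I would pass to the complex Jordan form $A=PJP^{-1}$. The variational (min-max) characterization of singular values gives $\sigma_d(P)\sigma_d(P^{-1})\,\sigma_j(B)\le\sigma_j(PBP^{-1})\le\sigma_1(P)\sigma_1(P^{-1})\,\sigma_j(B)$ for every $B$, and since $\sigma_d(P)\sigma_d(P^{-1})=(\sigma_1(P)\sigma_1(P^{-1}))^{-1}$, this yields $\sigma_j(A^{n}-I)\asymp\sigma_j(J^{n}-I)$ with implied constant $\kappa=\|P\|\,\|P^{-1}\|$ depending only on $A$. As $J^{n}-I$ is block diagonal, its singular values are the union, with multiplicity, of those of the blocks $J_k^{n}-I$, where $J_k=\mu_k I+N$ is a Jordan block of size $m_k$ for the eigenvalue $\mu_k$. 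Write $J_k^{n}-I=\beta_k I+\widetilde N_k$ with $\beta_k=\mu_k^{n}-1\neq0$ and $\widetilde N_k=\sum_{i=1}^{m_k-1}\binom{n}{i}\mu_k^{n-i}N^{i}$ (nilpotent). A crude bound gives $\|\widetilde N_k\|\lesssim n^{m_k-1}(1+|\mu_k|^{n})\asymp n^{m_k-1}|\beta_k|$, whence the largest singular value satisfies $\|J_k^{n}-I\|\le|\beta_k|+\|\widetilde N_k\|\lesssim n^{m_k-1}|\beta_k|$, and the Neumann series $(J_k^{n}-I)^{-1}=\sum_{i=0}^{m_k-1}(-1)^{i}\beta_k^{-i-1}\widetilde N_k^{\,i}$ gives $\|(J_k^{n}-I)^{-1}\|\lesssim|\beta_k|^{-1}\big(1+\|\widetilde N_k\|/|\beta_k|\big)^{m_k-1}\lesssim n^{(m_k-1)^2}|\beta_k|^{-1}$, which bounds the smallest singular value from below. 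Thus every singular value of $J_k^{n}-I$ lies between $C''^{-1}n^{-\tau_0}|\mu_k^{n}-1|$ and $C''n^{\tau_0}|\mu_k^{n}-1|$, with $\tau_0=\max_k(m_k-1)^2$.

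Finally, a short bookkeeping step matches things up. The block $J_k$ contributes $m_k$ singular values to $J^{n}-I$, and its eigenvalue $\mu_k$ appears exactly $m_k$ times in $\lambda_1,\dots,\lambda_d$; pairing these and using the previous step together with $|\mu_k^{n}-1|\asymp 1+|\mu_k|^{n}$, we obtain a bijection between the multisets $\{\sigma_j(J^{n}-I)\}_j$ and $\{1+|\lambda_j|^{n}\}_j$ whose paired entries have ratio in $[c^{-1}n^{-\tau_0},\,c\,n^{\tau_0}]$. An elementary lemma on sorted tuples — if sorted $d$-tuples $(a_j)$, $(b_j)$ admit a bijection $\pi$ with $a_j\asymp b_{\pi(j)}$ uniformly, then $a_j\asymp b_j$ with the same constants (a pigeonhole argument applied to $\pi(\{1,\dots,j\})$) — then gives $\sigma_j(J^{n}-I)\asymp 1+|\lambda_j|^{n}\asymp|\lambda_j^{n}-1|$ up to a factor $n^{\tau_0}$. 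Combining this with $\sigma_j(A^{n}-I)\asymp_\kappa\sigma_j(J^{n}-I)$ and absorbing the finitely many small $n$ into the constant proves the claim for any $\tau\ge\tau_0$. The main obstacle is the lower bound for the smallest singular value of $J_k^{n}-I$: the nilpotent part $\widetilde N_k$ can dominate the scalar part $\beta_k I$ in operator norm (already for a single $2\times2$ block with $|\mu_k|>1$ one has $\|\widetilde N_k\|\asymp n|\beta_k|$), so no Weyl-type perturbation inequality applies and one genuinely needs the explicit Neumann inversion; the rest is routine.
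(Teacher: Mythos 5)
Your proposal is correct, and it is self-contained where the paper's proof is not: the paper disposes of this lemma by quoting Hill--Velani, namely Lemma~3 of \cite{HV}, which factors $(A^n-I)^{-1}=A_1A_2$ into commuting matrices with $A_1$ having all eigenvalues of modulus $1$ and $A_2$ diagonalizable with eigenvalues of the correct moduli, together with Lemma~2 of \cite{HV}, which says that $A_1$ distorts balls by at most a polynomial factor $n^{\pm\tau}$; the conclusion is then immediate. You instead compare the singular values of $A^n-I$ directly with $|\lambda_j^n-1|$: conjugation to Jordan form changes each singular value only by the condition number of $P$, each block $J_k^n-I=\beta_kI+\widetilde N_k$ is controlled by the norm bound $\|\widetilde N_k\|\lesssim n^{m_k-1}|\beta_k|$ (valid precisely because $|\mu_k|\ne 1$ gives $|\mu_k^n-1|\asymp 1+|\mu_k|^n$ uniformly in $n$) together with the terminating Neumann series for the inverse, and a pigeonhole argument on sorted tuples matches the $j$-th singular value with $|\lambda_j^n-1|$. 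This buys an explicit exponent (any $\tau\ge\max_k(m_k-1)^2$ works) and removes the external citation, at the cost of the bookkeeping step; the paper's route is shorter only because it defers essentially this Jordan-block computation to \cite{HV}. Two cosmetic points: in your min--max comparison the roles of $\sigma_1$ and $\sigma_d$ (largest versus smallest) are swapped relative to your own increasing-order convention, though the inequality you actually use, $\kappa^{-1}\sigma_j(B)\le\sigma_j(PBP^{-1})\le\kappa\,\sigma_j(B)$ with $\kappa=\|P\|\,\|P^{-1}\|$, is the correct one; and the sorted-tuple lemma deserves its one-line proof (among the $\pi$-preimages of the $j$ smallest entries of $(b_i)$ there is an index $\ge j$, giving $a_j\le a_i\le Kb_{\pi(i)}\le Kb_j$, and symmetrically), but both are routine and neither affects the argument.
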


\begin{proof}
By Lemma 3 in \cite{HV}, for $n\ge1$, we have $(A^n-I)^{-1}=A_1A_2$, where all eigenvalues of  $A_1$ have absolute value 1, the matrix $A_2$ is diagonalised over $\mathbb{R}$ with eigenvalues of modulus  $|\lambda_i^n-1|$, $1\le i\le d$, and $A_1$ and $A_2$ commute. 
Applying \cite[Lemma 2]{HV}, there is $\tau\ge0$ depending only on $A$ such that for any ball $B(x,r)$, 
 $$B(x'',O(n^{-\tau})r)\subset A_1B(x,r)\subset B(x',O(n^{\tau})r)$$
for some $x', ~x''\in\mathbb{T}^d$. Combining these, we conclude that $(A^n-I)^{-1}B(x,r)$ contains an ellipse with lengths of semi-axes $O(n^{-\tau})|\lambda_i^n-1|^{-1}r$, $1\le i\le d$, and also is contained in an an ellipse with lengths of semi-axes $O(n^{\tau})|\lambda_i^n-1|^{-1}r$, $1\le i\le d$.

\end{proof}

\begin{rem}\label{assumption}
By Lemma \ref{singular and eigen}, $e_{n,j}/\ell_{n,j}$ grows with polynomial speed, which does not influence the formula of Hausdorff dimension. 
Hence for simplicity, from now on, we assume that $A$ is diagonalised over $\mathbb{R}$ with eigenvalues $\lambda_i>1$, $1\le i\le d$, and $r_n=e^{-\alpha n}$, $n\ge1$. 
\end{rem}

Recall that $\rho_0$ is the usual quotient distance. For $i\ne k\in\{1,2,\dots,H_n\}$, denote
\[d_n(i,k)=\inf\big\{\rho_0(x,y)\colon x\in R_{n,i},~y\in R_{n,k}\big\},\] 
that is, $d_n(i,k)$ is the distance between $R_{n,i}$ and $R_{n,k}$. When $i=k$, we have $d_n(i,k)=0$. Put
\[d_n=\min_{ 1\le i\ne k\le H_n}d_n(i,k).\]
The following lemma gives some informations on $d_n$, the shortest distance between the ellipses. 

\begin{lem}\label{distance}
Let $A$ be a non-singular integer matrix. Assume that  the modulus of all eigenvalues are not 1. Then for $n$ large enough,
\[d_n\gtrsim (\lambda_d^n-1)^{-1}.\]
In particular, we have 
$R_{n,i}\cap R_{n,j}=\varnothing$
 for $i\ne j$, $1\le i,j\le H_n$.

\end{lem}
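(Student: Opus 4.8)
The plan is to reduce the distance estimate to a counting/packing argument. The centres of the ellipses $R_{n,i}$ are exactly the periodic points $\mathcal{P}_n$, of which there are $H_n = \prod_{i=1}^d|\lambda_i^n-1| \asymp \prod_{i=1}^d \lambda_i^n$. Two distinct periodic points $x,y\in\mathcal{P}_n$ satisfy $(A^n-I)(x-y)\in\mathbb{Z}^d$, i.e.\ $x-y\in (A^n-I)^{-1}\mathbb{Z}^d \pmod{1}$; so $\mathcal{P}_n$ is (a set of coset representatives of) the finite group $(A^n-I)^{-1}\mathbb{Z}^d/\mathbb{Z}^d$, which has order $H_n$. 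First I would observe that $\mathcal{P}_n$ is homogeneous in the sense that the minimal pairwise distance equals the distance from $0$ to the nearest nonzero lattice point of $L_n := (A^n-I)^{-1}\mathbb{Z}^d + \mathbb{Z}^d$ (intersected appropriately), because translating by a group element is an isometry of $\mathbb{T}^d$. Hence it suffices to bound below $\lambda_1(L_n)$, the length of the shortest nonzero vector of the lattice $L_n$.

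Next I would get at $\lambda_1(L_n)$ through its dual. The lattice $L_n = (A^n-I)^{-1}\mathbb{Z}^d + \mathbb{Z}^d$ contains $\mathbb{Z}^d$, and a shortest vector $v = (A^n-I)^{-1}m + p$ with $m\in\mathbb{Z}^d\setminus(A^n-I)\mathbb{Z}^d$, $p\in\mathbb{Z}^d$ can be taken with $v = (A^n-I)^{-1}w$ for some $w = m + (A^n-I)p \in\mathbb{Z}^d\setminus\{0\}$ (absorbing the integer part). Then $\|v\| = \|(A^n-I)^{-1}w\| \ge \|A^n-I\|^{-1}_{\mathrm{op}}\,\|w\| \ge \|A^n-I\|^{-1}_{\mathrm{op}} \gtrsim (\lambda_d^n - 1)^{-1}$, using that the operator norm of $A^n-I$ is comparable to its largest eigenvalue modulus $\lambda_d^n-1$ (up to a polynomial factor from non-diagonalisability, harmless as in Remark 2.4, or exactly comparable under the diagonalisation normalisation adopted there). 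This gives $d_n(\text{centres}) \gtrsim (\lambda_d^n-1)^{-1}$ for the separation of the \emph{centres}.

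Finally I would pass from centres to ellipses: $d_n \ge d_n(\text{centres}) - 2e_{n,1}$, where $e_{n,1}\asymp \ell_{n,1} = 2\psi(n)|\lambda_1^n-1|^{-1} = 2e^{-\alpha n}(\lambda_1^n-1)^{-1}$ is the longest semi-axis. Since $\alpha \ge \log(\lambda_d/\lambda_1)$ in the regime of interest (and in any case one only needs $e_{n,1}$ to be a small fraction of $(\lambda_d^n-1)^{-1}$), we have $e_{n,1} \lesssim e^{-\alpha n}\lambda_1^{-n} \le \lambda_d^{-n}\lambda_1^n\cdot \lambda_1^{-n} = \lambda_d^{-n} \asymp (\lambda_d^n-1)^{-1}$, and more precisely $2e_{n,1}$ is a vanishing proportion of the centre-separation, so $d_n \gtrsim (\lambda_d^n-1)^{-1}$ survives; in particular $d_n > 0$, giving disjointness of the ellipses. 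The main obstacle is the first reduction: making precise that the minimal pairwise distance of $\mathcal{P}_n$ is governed by the shortest vector of $L_n$ and then relating that shortest vector to $\|A^n-I\|_{\mathrm{op}}$ rather than to $\det(A^n-I) = H_n$; one must be careful that $w$ ranges over all of $\mathbb{Z}^d\setminus\{0\}$ (so the bound is $\|A^n-I\|_{\mathrm{op}}^{-1}$, not $\|(A^n-I)^{-1}\|_{\mathrm{op}}^{-1}$), and, in the non-diagonalisable case, to absorb the $n^\tau$ factors from Lemma 2.2 into the $\gtrsim$.
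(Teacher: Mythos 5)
Your first two steps are fine: identifying the centres with the finite subgroup $(A^n-I)^{-1}\mathbb{Z}^d/\mathbb{Z}^d$ and bounding the shortest nonzero element by $\|(A^n-I)^{-1}w\|\ge \|A^n-I\|_{\mathrm{op}}^{-1}\|w\|\ge \|A^n-I\|_{\mathrm{op}}^{-1}\gtrsim(\lambda_d^n-1)^{-1}$ is correct, and is essentially a repackaging of the observation the paper also uses, namely that $(A^n-I)$ maps distinct periodic points to distinct points of $\mathbb{Z}^d$, which are at distance at least $1$ apart. The genuine gap is in the last step, the passage from centre separation to separation of the ellipsoids. The bound $d_n\ge d_n(\text{centres})-2e_{n,1}$ subtracts the \emph{largest} semi-axis isotropically, and $2e_{n,1}\asymp\psi(n)(\lambda_1^n-1)^{-1}$ is in general \emph{not} a vanishing (nor even small) fraction of the centre separation. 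Already in the model case $A=\mathrm{diag}(\lambda_1,\dots,\lambda_d)$ with $\psi(n)=e^{-\alpha n}$ and $\alpha=\log(\lambda_d/\lambda_1)$ (exactly the normalisation of Remark 2.4 at the boundary value covered by Theorem 1.2), the nearest centres are at distance $(\lambda_d^n-1)^{-1}$ apart while $2e_{n,1}\approx 2\lambda_d^{-n}$, so your difference is negative and the argument returns nothing; for $\alpha<\log(\lambda_d/\lambda_1)$ (the lemma is stated, and must hold, for any decreasing $\psi$ with $\psi(n)\to0$, with no hypothesis on $\alpha$ at all) the largest semi-axis is exponentially bigger than the centre gap and the approach collapses entirely. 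The point you are missing is that the problem is anisotropic: two neighbouring ellipsoids whose centres differ along the $\lambda_d$-eigendirection only protrude $\psi(n)(\lambda_d^n-1)^{-1}$ in that direction, so the relevant quantity is the semi-axis in the separating direction, not $e_{n,1}$.

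The paper's proof captures exactly this directional information with one extra idea: enlarge the balls before pulling back. Since $\psi(n)<\tfrac13$ for large $n$ and the points $(A^n-I)x$, $x\in\mathcal{P}_n$, are integer points, the balls $B\bigl((A^n-I)x,\tfrac13\bigr)$ are pairwise disjoint, hence so are their preimages $\widetilde R_{n,i}$ under $A^n-I$, which are concentric with the $R_{n,i}$ and have semi-axes $\tfrac13(\lambda_j^n-1)^{-1}$. Each $R_{n,i}$ then sits inside $\widetilde R_{n,i}$ with a margin of at least $\bigl(\tfrac13-\psi(n)\bigr)(\lambda_j^n-1)^{-1}$ in the $j$-th principal direction, so $d_n\ge 2\bigl(\tfrac13-\psi(n)\bigr)(\lambda_d^n-1)^{-1}>0$, with no reference to $\alpha$. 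To repair your proof you would have to replace the crude subtraction of $2e_{n,1}$ by some comparable direction-by-direction comparison (for instance, apply $A^n-I$ to the segment realising the distance between two ellipsoids and use that its image joins two balls of radius $\psi(n)$ around distinct integer points); as written, the final step does not establish the lemma even in the regime $\alpha\ge\log(\lambda_d/\lambda_1)$.
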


\begin{proof}
Without loss of generality, we assume that $\psi(1)<\frac{1}{3}$. For $n\ge1$ and $x\in\mathcal{P}_n$, note that
\[B\big((A^n-I)x,\psi(n)\big)\subset B\big((A^n-I)x,\frac{1}{3}\big),\]
and $\{B\big((A^n-I)x,\frac{1}{3}\big)\colon x\in\mathcal{P}_n\}$ are disjoint, since $\{(A^n-I)x\colon x\in\mathcal{P}_n\}\subset \mathbb{Z}^d$. It follows that  $\{T^{-n}B\big((A^n-I)x,\frac{1}{3}\big)\colon x\in\mathcal{P}_n\}$ are also disjoint.
Then we conclude that  for $1\le i \le H_n$, the ellipse $R_{n,i}$ is contained in an ellipse $\widetilde{R}_{n,i}$ with lengths of semi-axes $\frac{1}{3}(\lambda_j^n-1)^{-1}$, $1\le j \le d$, and $\{\widetilde{R}_{n,i}\}_i$ are disjoint. It implies that for any $i\ne k$,
\[d_n(i,k)\ge \min\limits_{j=1,2,\dots,d}\Big\{2\Big(\frac{1}{3}-\psi(n)\Big)(\lambda_j^n-1)^{-1}\Big\},\]
which implies that $d_n\ge 2(\frac{1}{3}-\psi(n))(\lambda_d^n-1)^{-1}>0 $. 

\end{proof}

Recall $\mathcal{P}_n$ consists of  all periodic points with period $n$. 
 Now we estimate the number of  periodic points with period $n$ in a given ball.

\begin{lem}\label{local counting}
For any $B=B(x,r)$ in $\T^d$ and $n\ge1$, we have
\[\#\mathcal{P}_n\cap B:=\#\{y\in B\colon (A^n-I)y\pmod 1=0\}\lesssim \prod_{j\colon (\lambda_j^n-1)r>1}\lceil(\lambda_j^n-1)r\rceil.\]  
If $r(\lambda_1^{n}-1)>1$, then 
\[\#\mathcal{P}_n\cap B\asymp r^dH_n.\]
\end{lem}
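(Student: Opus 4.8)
The plan is to transfer the count to $\mathbb{R}^d$, reduce it to counting integer points in an ellipsoid, and then bound that number from above by a covering argument and from below by a volume/packing argument. For the reduction: since $A^n-I$ is an integer matrix, a point of $\mathbb{T}^d$ has period $n$ exactly when it lies in the image of the lattice $L_n:=(A^n-I)^{-1}\mathbb{Z}^d\supseteq\mathbb{Z}^d$, so $\mathcal{P}_n$ is identified with $L_n/\mathbb{Z}^d$. For $r\ge\tfrac12$ the trivial bound $\#(\mathcal{P}_n\cap B)\le\#\mathcal{P}_n=H_n$, together with $r^dH_n\asymp H_n\asymp\prod_{j:(\lambda_j^n-1)r>1}\lceil(\lambda_j^n-1)r\rceil$ (valid for $n$ large), already gives both assertions, so from now on $r<\tfrac12$. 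Then $B(x,r)$ lifts isometrically to a genuine ball $B(\tilde x,r)\subset\mathbb{R}^d$ (diameter $<1$), and since $A^n-I$ is invertible,
\[
\#\big(\mathcal{P}_n\cap B(x,r)\big)=\#\big(L_n\cap B(\tilde x,r)\big)=\#\big(\mathbb{Z}^d\cap\mathcal{E}_n\big),\qquad \mathcal{E}_n:=(A^n-I)B(\tilde x,r),
\]
the second equality because $y\mapsto(A^n-I)y$ maps $L_n\cap B(\tilde x,r)$ bijectively onto $\mathbb{Z}^d\cap\mathcal{E}_n$. Now $\mathcal{E}_n$ is an ellipsoid whose semi-axes are $r$ times the singular values of $A^n-I$; writing, as in Remark \ref{assumption}, $A=P\,\mathrm{diag}(\lambda_1,\dots,\lambda_d)\,P^{-1}$ with $P$ fixed and $\lambda_i>1$, we have $A^n-I=P\,\mathrm{diag}(\mu_1,\dots,\mu_d)\,P^{-1}$ with $\mu_i:=\lambda_i^n-1$, and since $P$ depends only on $A$ the singular values of $A^n-I$ are comparable to $\mu_1,\dots,\mu_d$, so the semi-axes of $\mathcal{E}_n$ are $\asymp\mu_1 r,\dots,\mu_d r$.

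For the upper bound, $\mathcal{E}_n$ is a rotation of the ellipsoid with semi-axes $a_i\asymp\mu_i r$, hence lies inside a box with side lengths $\asymp\mu_i r$. Cutting that box into sub-cubes of side $1/(2\sqrt d)$ covers $\mathcal{E}_n$ by at most $\prod_{i=1}^d\lceil C_1\mu_i r\rceil$ Euclidean balls of radius $\tfrac14$, with $C_1$ depending only on $A$. A factor $\lceil C_1 t\rceil$ is $1$ once $t<1/C_1$ and is $\lesssim\lceil t\rceil$ when $t>1$, so this product is $\lesssim\prod_{i:\mu_i r>1}\lceil\mu_i r\rceil$; as each ball of radius $\tfrac14$ contains at most one point of $\mathbb{Z}^d$, we get $\#\mathcal{P}_n\cap B=\#(\mathbb{Z}^d\cap\mathcal{E}_n)\lesssim\prod_{i:\mu_i r>1}\lceil\mu_i r\rceil$, the first claim.

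For the lower bound suppose $r(\lambda_1^n-1)=\mu_1 r$ is large, so that $\mu_i r>1$ for all $i$ and — provided this quantity exceeds a constant depending only on $A$, which is the effective content of the hypothesis $r(\lambda_1^n-1)>1$ and holds outright when $P=I$, e.g.\ for diagonal integer matrices — every semi-axis $a_i\gtrsim\mu_i r$ exceeds $2\sqrt d$. Then $\mathcal{E}_n$ contains a box $\mathcal{B}$ with side lengths $\asymp\mu_i r$, each at least $4\sqrt d$; the inner parallel body $\mathcal{B}_{-\sqrt d}$ is covered by the balls of radius $\tfrac{\sqrt d}{2}$ about the points of $\mathbb{Z}^d\cap\mathcal{B}$, whence $\#(\mathbb{Z}^d\cap\mathcal{B})\gtrsim\mathrm{vol}(\mathcal{B}_{-\sqrt d})\gtrsim\prod_i\mu_i r$. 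Therefore $\#\mathcal{P}_n\cap B=\#(\mathbb{Z}^d\cap\mathcal{E}_n)\gtrsim\prod_i\mu_i r=r^d\prod_i\mu_i=r^dH_n$, and since the upper bound gives $\#\mathcal{P}_n\cap B\lesssim\prod_i\lceil\mu_i r\rceil\le\prod_i(\mu_i r+1)\lesssim r^dH_n$ here, we conclude $\#\mathcal{P}_n\cap B\asymp r^dH_n$.

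The upper bound is routine; I expect the lower bound to be the only genuine obstacle. There the issue is that a long, thin, arbitrarily oriented ellipsoid must still capture $\asymp r^dH_n$ points of $\mathbb{Z}^d$, and the decisive point is that the hypothesis on $r(\lambda_1^n-1)$ forces the \emph{shortest} semi-axis of $\mathcal{E}_n$ — which is $\asymp(\lambda_1^n-1)r$ — to be bounded below, so $\mathcal{E}_n$ contains a ball-like box in every direction and a crude volume estimate applies. A minor technical point is the reduction to $r<\tfrac12$, ensuring $B(x,r)$ lifts to $\mathbb{R}^d$ without overlap.
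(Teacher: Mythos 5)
Your proof is correct and follows essentially the same route as the paper: identify $\#(\mathcal{P}_n\cap B)$ with the number of points of $\mathbb{Z}^d$ in the ellipsoid $(A^n-I)B$, cover that ellipsoid by a comparable box cut into small cubes (each holding at most one lattice point) for the upper bound, and fit a comparable box inside it (unit cubes / volume count) for the lower bound. Your extra care about lifting $B$ to $\mathbb{R}^d$ when $r<\tfrac12$ and about the hypothesis $r(\lambda_1^n-1)>1$ effectively meaning ``larger than a constant depending only on $A$'' only makes explicit points the paper's own proof leaves implicit in its constants $c_1$, $\sqrt d$.
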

\begin{proof}
 For $n\ge1$, 
$$\#\mathcal{P}_n\cap B=\#\{(A^n-I)B\cap \mathbb{Z}^d\}.$$
Notice that there is a constant $c_1>1$ such that $(A^n-I)B$ may be covered by a rectangle $R_1$ with length $2c_1(\lambda_j^n-1)r$, $j=1,\dots,d$, and contains a rectangle $R_2$ with length $\frac{1}{c_1\sqrt{d}}(\lambda_j^n-1)r$, $j=1,\dots,d$.

(i)\, A square of length $1/2$ contains at most one point in $\mathbb{Z}^d$, and  $R_1$ can be covered by 
\[\prod_{j\colon 2c_1(\lambda_j^n-1)r>1/2}\lceil 4c_1(\lambda_j^n-1)r \rceil\]
squares with length $1/2$, which implies that 
\begin{equation}\label{num}
\#\mathcal{P}_n\cap B\lesssim \prod_{j\colon (\lambda_j^n-1)r>1}\lceil (\lambda_j^n-1)r \rceil.
\end{equation}

(ii)\, If $r(\lambda_1^{n}-1)>1$, then $r(\lambda_j^{n}-1)>1$, $j=1,\dots,d$.

Since a square of length 1 contains at least one point in $\mathbb{Z}^d$, it suffices to estimate the number of squares with length 1 which are contained in $R_2$.  Note that $R_2$ contains 
\[\prod_{j=1}^d\Big\lfloor\frac{1}{c_1\sqrt{d}}(\lambda_j^n-1)r\Big\rfloor\]
squares with length 1. Combing with \eqref{num}, we have 
\[\#\mathcal{P}_n\cap B\asymp r^d\prod_{j=1}^d(\lambda_j^n-1)=r^dH_n.\]


\end{proof}
\begin{rem}\label{remark}
For any $B(x,l)$ in $\T^d$, given $r>0$, if $2(\lambda_d^n-1)l\le r$, then $(A^n-I)B(x,l)$ can be covered by only one ball of radius $r$. Hence in Lemma \ref{local counting}, if $(\lambda_d^n-1)l\le 1/4$, we have
$$\#\{y\in B(x,l)\colon (A^n-I)y\pmod 1=0\}\le1.$$

\end{rem}
The following corollary follows from Lemma \ref{local counting},  which is crucial to give the lower bound on $\dim_{\rm H}R(\psi)$. 
Recall that $R_n(\psi)=\bigcup_{i=1}^{H_n}R_{n,i}$.
\begin{cor}\label{cor}
For $n\ge1$, and $i\in\{1,\dots,H_n\}$, 
\[\#\mathcal{P}_m\cap R_{n,i}\asymp \psi(n)^dH_mH_n^{-1}\]
holds for $m\ge n$ large enough.
\end{cor}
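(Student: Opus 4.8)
The plan is to reduce the count to a lattice-point count in a ``fat'' convex body, in the spirit of the proof of Lemma~\ref{local counting}. By Remark~\ref{assumption} I work under the standing simplification that $A$ is diagonal with real eigenvalues $\lambda_1,\dots,\lambda_d>1$, so that each ellipse $R_{n,i}$ is, up to multiplicative constants depending only on $A$ (recall $e_{n,j}\asymp\ell_{n,j}$ from the discussion preceding Lemma~\ref{singular and eigen}), an axis-parallel box whose $j$-th side has length comparable to $\ell_{n,j}=2\psi(n)(\lambda_j^n-1)^{-1}$. In the non-diagonalisable case one carries along the polynomial factors $n^{\pm\tau}$, $m^{\pm\tau}$ furnished by Lemma~\ref{singular and eigen}; as noted in Remark~\ref{assumption} these do not affect the conclusion, after suitably enlarging the eventual threshold on $m$.

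First I would lift $R_{n,i}$ to a genuine box $\widetilde R_{n,i}\subset\R^d$, which is legitimate because $\operatorname{diam}R_{n,i}\le e_{n,1}\to0$, so the covering map $\pi\colon\R^d\to\T^d$ is injective on $\widetilde R_{n,i}$. Since $A^m-I$ is invertible and $x=\pi(\tilde x)$ is a periodic point of period $m$ exactly when $(A^m-I)\tilde x\in\Z^d$, the linear bijection $\tilde x\mapsto(A^m-I)\tilde x$ identifies $\mathcal P_m\cap R_{n,i}$ with $(A^m-I)\widetilde R_{n,i}\cap\Z^d$, so that
\[\#\big(\mathcal P_m\cap R_{n,i}\big)=\#\big((A^m-I)\widetilde R_{n,i}\cap\Z^d\big).\]
As $A^m-I$ is diagonal, $(A^m-I)\widetilde R_{n,i}$ is again an axis-parallel box, now with $j$-th side of length comparable to $(\lambda_j^m-1)(\lambda_j^n-1)^{-1}\psi(n)$.

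Finally, with $n$ (hence $\psi(n)$) and $i$ fixed, I would observe that $\psi(n)(\lambda_1^n-1)^{-1}$ is a fixed positive constant while $\lambda_1^m-1\to\infty$, and that $j=1$ gives the shortest of the $d$ sides; hence there is a threshold $m_0=m_0(n)$ beyond which all $d$ side lengths are $\ge1$. For such $m$, the number of integer points in an axis-parallel box with every side $\ge1$ is comparable, with constants depending only on $d$, to the volume of the box (upper bound $\prod_{j}(s_j+1)\le 2^d\prod_j s_j$, lower bound $\prod_j\lfloor s_j\rfloor\ge 2^{-d}\prod_j s_j$). Therefore
\[\#\big(\mathcal P_m\cap R_{n,i}\big)\asymp\prod_{j=1}^d(\lambda_j^m-1)(\lambda_j^n-1)^{-1}\psi(n)=\psi(n)^d\,\frac{\prod_{j=1}^d(\lambda_j^m-1)}{\prod_{j=1}^d(\lambda_j^n-1)}=\psi(n)^d H_m H_n^{-1},\]
which is the asserted estimate. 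The one delicate point is uniformity of the implied constants: both $e_{n,j}\asymp\ell_{n,j}$ and the box-volume/lattice-point comparison have constants dictated only by $A$ and $d$, hence are uniform in $n,m,i$, while the threshold $m_0$ is allowed to depend on $n$, exactly as the statement permits. I expect this fatness requirement --- bounding below the shortest semi-axis of $(A^m-I)R_{n,i}$, which is what forces $m$ to be large relative to $n$ --- to be the main obstacle.
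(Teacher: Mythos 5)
Your argument is correct, and it reaches the estimate by a slightly different route than the paper. You bypass Lemma~\ref{local counting} entirely: after lifting, you apply the linear map $A^m-I$ to (a box comparable to) the whole ellipse $R_{n,i}$ in one shot and compare the lattice-point count of the resulting axis-parallel box with its volume, once every side length exceeds $1$ — which indeed happens for $m\ge m_0(n)$ because each side is $\asymp(\lambda_j^m-1)(\lambda_j^n-1)^{-1}\psi(n)\to\infty$, and the product of the sides is exactly $\asymp\psi(n)^dH_mH_n^{-1}$. The paper instead keeps the counting lemma as the black box: it tiles an inner rectangle of $R_{n,i}$ by $\prod_j\lceil(\lambda_d^n-1)/(\lambda_j^n-1)\rceil$ disjoint cubes of side $\frac{1}{\sqrt d}\ell_{n,d}$, applies Lemma~\ref{local counting} to a ball in each cube under the condition $\frac{1}{\sqrt d}\ell_{n,d}(\lambda_1^m-1)>1$, and sums; the upper bound comes from the enclosing rectangle of sides $\ell_{n,j}$. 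The two proofs rest on the same identification of $\mathcal P_m\cap R_{n,i}$ with $\Z^d\cap(A^m-I)(\text{box})$, so they are close in spirit, but your direct count is more self-contained and needs only the mildly weaker fatness condition $(\lambda_1^m-1)\psi(n)(\lambda_1^n-1)^{-1}\gtrsim1$ rather than the paper's $(\lambda_1^m-1)\psi(n)(\lambda_d^n-1)^{-1}\gtrsim1$, while the paper's version reuses a lemma it needs elsewhere anyway and avoids discussing the lift explicitly (your observation that $\pi$ is injective on $\widetilde R_{n,i}$ supplies that detail, and your remark that only the divergence of the shortest side matters covers the $j=1$ claim). Also note your uniformity discussion (constants depending only on $A$ and $d$, threshold depending on $n$) matches exactly what the statement requires.
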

\begin{proof}
Recall that $\ell_{n,j}=2\psi(n)(\lambda_j^n-1)^{-1}$, $j=1,\dots,d$. For $n\ge1$, and $i\in\{1,\dots,H_n\}$, $R_{n,i}$ contains a rectangle $\widetilde{R}_{n,i}$ with length $\frac{1}{\sqrt{d}}\ell_{n,j}$, $j=1,\dots,d$, and $\widetilde{R}_{n,i}$ contains
\[\prod_{j=1}^d\Big\lceil \frac{\lambda_d^n-1}{\lambda_j^n-1} \Big\rceil\]
disjoint squares with length $\frac{1}{\sqrt{d}}\ell_{n,d}$.

Taking $m>n$ large enough such that $\frac{1}{\sqrt{d}}\ell_{n,d}(\lambda_1^m-1)>1$, by Lemma \ref{local counting}, for any ball $B$ with radius $\frac{1}{2\sqrt{d}}\ell_{n,d}$, we have
\[\#\mathcal{P}_m\cap B\asymp \ell_{n,d}^dH_m.\]
Therefore 
\begin{equation*}
\begin{split}
\#\mathcal{P}_m\cap R_{n,i}&\gtrsim \ell_{n,d}^dH_m\prod_{j=1}^d\lceil \frac{\lambda_d^n-1}{\lambda_j^n-1} \rceil\asymp H_m\Big(\frac{\psi(n)}{\lambda_d^n-1}\Big)^d\prod_{j=1}^d\lceil \frac{\lambda_d^n-1}{\lambda_j^n-1} \rceil\\
&\asymp \psi(n)^dH_m\prod_{j=1}^d\frac{1}{\lambda_j^n-1} =\psi(n)^dH_mH_n^{-1}.
\end{split}
\end{equation*}
Note that $R_{n,i}$ is contained a rectangle with length $\ell_{n,j}$, $j=1,\dots,d$. It follows that
\[\#\mathcal{P}_m\cap R_{n,i}\lesssim \psi(n)^dH_mH_n^{-1}.\]
\end{proof}

 In the following, we prove Theorem \ref{theorem1}. The proof  is divided into two parts: Section \ref{up} and Section \ref{low}. In Section \ref{up}, we use natural covering to estimate the upper bound on $\dim_{\rm H}R(\psi)$, and in Section \ref{low}, we  will construct a Cantor subset $K$ of $R(\psi)$ to  give a lower bound to $\dim_{\rm H}R(\psi)$.

\section{Upper bound on $\dim_{\rm H}R(\psi)$}\label{up}

In Remark \ref{assumption}, we assumed that $A$ is diagonalised, hence there is a constant $c_2>1$ such that the quotient of singular values and eigenvalues of $(A^n-I)^{-1}$ is bounded by $c_2$ from above, and $c_2^{-1}$ from below for $n$ large enough.

For $m\ge1$, we have $R(\psi)\subset \bigcup_{n=m}^{\infty}R_n(\psi)=\bigcup_{n=m}^{\infty}\bigcup_{i=1}^{H_n}R_{n,i}$. Recall that the lengths of semi-axes of $R_{n,i}$ are about $\ell_{n,j}=2\psi(n)(\lambda_j^n-1)^{-1}$, $j=1,2,\dots,d$.

For $k\in\{1,2,\dots,d\}$,  we may use balls with radius $\ell_{n,k}$ to cover $R_{n,i}$, and the number of such balls is about 
\begin{equation}\label{num}
\prod_{j<k}\frac{\ell_{n,j}}{\ell_{n,k}}=\prod_{j=1}^k\frac{\lambda_k^n-1}{\lambda_j^n-1}.
\end{equation}

For any $\delta>0$, 
\begin{equation*}
\begin{split}
\mathcal{H}_{\delta}^s(R(\psi))
&\lesssim \sum_{n=m}^{\infty}H_n\ell_{n,k}^s\prod_{j=1}^k\frac{\lambda_k^n-1}{\lambda_j^n-1}\lesssim\sum_{n=m}^{\infty} \lambda_k^{n(k-s)}\psi(n)^s\prod_{j=k+1}^d\lambda_j^n\\
&=\exp\Big\{n\Big(k\log\lambda_k+\sum_{j=k+1}^d\log\lambda_j-s(\alpha+\log\lambda_k)\Big)\Big\}.
\end{split}
\end{equation*}
Notice that for any $s>\frac{k\log\lambda_k+\sum_{j=k+1}^d\log\lambda_j}{\alpha+\log\lambda_k}$, $\mathcal{H}^s(R(\psi))<\infty$, hence 
$$\dim_{\rm H}R(\psi)\le \min_k\Big\{\frac{k\log\lambda_k+\sum_{j=k+1}^d\log\lambda_j}{\alpha+\log\lambda_k}\Big\}.$$

\begin{rem}
It follows from Lemma \ref{distance} that for $i\ne j$,
\[{\rm dist}(R_{n,i},R_{n,j})\gtrsim (\lambda_d^n-1)^{-1}.\]
When  $\alpha>\log(\lambda_d/\lambda_1)$, we have $\ell_{n,1}\lesssim (\lambda_d^n-1)^{-1}$, which implies any ball with radius $\ell_{n,k}$ intersects only one ellipsoid $R_{n,i}$. 
But if $\lambda_d/\lambda_1\ne1$, for  $\alpha\in\big(0, \log(\lambda_d/\lambda_1)\big]$, the number of balls of radius $\ell_{n,k}$ covering $R_n(\psi)$ may be less that the number in \eqref{num}, cause a ball of radius $\ell_{n,k}\le \ell_{n,1}$ can cover many $R_{n,i}$, that is,  there may exist  better coverings for $R_n(\psi)$, which gives another dimension formula such as Theorem \ref{theorem2}.
\end{rem}

\section{Lower bound on $\dim_{\rm H}R(\psi)$.}\label{low}

Recall that $R_n$ consists of elliptical discs $\{R_{n,i}\}_{i=1}^{H_n}$ with lengths of semi-axes $\ell_{n,j}$, $j=1,2,\dots,d$, whose centres are periodic points satisfying \eqref{equation:periodic}. We first suppose that $\log(\lambda_d/\lambda_1)<\alpha<\infty$.

\subsection{Construct a Cantor set}

Write $\mathcal{A}_n= \{1,2,\dots,H_n\}$ .
 Let $\{n_j\}_{j\ge0}$ be an increasing sequence of positive integers (to be determined later). 
 
Put $n_0=0$, $K_0=\T^d$.  Let 
\[K_1=\bigcup_{i\in \mathcal{C}_1}R_{n_1,i}, \]
where 
\[\mathcal{C}_1:=\{i\in\mathcal{A}_{n_1}\colon ~R_{n_1,i}\subset K_0\}.\]
For $j\ge1$, suppose $K_j$ has been defined, then
\[K_{j+1}=\bigcup_{i\in \mathcal{C}_{l+1}}R_{n_{j+1},i}, \]
where
\[\mathcal{C}_{j+1}:=\{i\in\mathcal{A}_{n_{j+1}}\colon R_{n_{j+1},i}\subset K_j\}.\]
Here choose $\{n_j\}_{j\ge0}$ such that 
\begin{equation}\label{limit0}
\lim_{j\to\infty}\frac{\sum_{i=1}^{j-1}n_i}{n_j}=0,
\end{equation}
and for $j\ge1$,
\begin{equation}\label{nj}
\psi(n_j)(\lambda_d^{n_j}-1)^{-1} > (\lambda_1^{n_{j+1}}-1)^{-1}. 
\end{equation}
Notice that $\mathcal{C}_j\ne\varnothing$ is guaranteed by  \eqref{nj}. Then
 $\{K_j\}_j$ is a decreasing sequence of nonempty sets. Let $K=\bigcap_j K_j$.

\subsection{Construct the mass distribution}

Now we define a mass distribution supported on $K$. Given a set $E\subset \mathbb{T}^d$, for $j\ge1$, denote  the collection of ellipses in $\{R_{n_j,m}\}_{m=1}^{n_j}$ which are contained in $E$ by 
$\mathcal{C}_j(E)$, that is,
$$\mathcal{C}_j(E):=\{1\le k\le H_{n_j}\colon R_{n_j,k}\subset E\}.$$

For $j=0$, $\mu(K_0)=1$. For $j=1$ and $i\in\mathcal{C}_1$,
\[\mu(R_{n_1,i})=\frac{1}{\#\mathcal{C}_1}.\]
For $j=2$ and $i\in\mathcal{C}_2$, there is a unique $m(i)\in \mathcal{C}_1$ such that $R_{n_2,i}\subset R_{n_1,m(i)}$, then let
\[\mu(R_{n_2,i})=\frac{1}{\#\mathcal{C}_2( R_{n_1,m(i)} )}\mu(R_{n_1,m(i)}).\]
Assume that we have defined $\mu$ on the sets $\{R_{n_k,i}\colon i\in\mathcal{C}_k\}$, now for $j=k+1$ and $i\in\mathcal{C}_{k+1}$, since $\{R_{n,i}\}_{i=1}^{H_n}$ do not intersect each other,  there is a unique $m(i)\in \mathcal{C}_k$ such that $R_{n_{k+1},i}\subset R_{n_k,m(i)}$, then let
\[\mu(R_{n_{k+1},i})=\frac{1}{\#\mathcal{C}_{k+1}( R_{n_k,m(i)}) }\mu(R_{n_k,m(i)}).\]
Note that  for $j\ge1$
\[\mu(K_j)=1.\]
Then the definition of $\mu$ may be extended to all subsets of $\mathbb{T}^d$ so that $\mu$ becomes a measure. The support of $\mu$ is contained in $K$.

\subsection{ Estimation on $\mu(R_{n_j,m})$.}


For $j\ge1$ and $m\in\{1,\dots,H_{n_j}\}$, there exists $\{m_k\}_{k=1}^{j-1}$ with $m_k\in\{1,2, \dots,H_{n_k}\}, ~1\le k\le j-1$ such that 
$$R_{n_j,m}\subset R_{n_{j-1},m_{j-1}}\subset \dots\subset R_{n_1,m_1}.$$
Then 
$$\mu(R_{n_j,m})=\frac{1}{\#\mathcal{C}_1}\prod_{k=1}^{j-1}\frac{1}{\#\mathcal{C}_{k+1}(R_{n_k,m_k})},$$
here 
$\#\mathcal{C}_1=H_{n_1}.$
 By Corollary \ref{cor}, there is a constant $C_1>1$ such that for any $k\ge1$, 
\[ C_1^{-1} \psi(n_k)^d\frac{H_{n_{k+1}}}{H_{n_k}}\le \#\mathcal{C}_{k+1}(R_{n_k,m_k})\le C_1 \psi(n_k)^d\frac{H_{n_{k+1}}}{H_{n_k}}.\]
It follows that  
\begin{equation}\label{measureu}
\mu(R_{n_j,m})\le C_1^{j-1} \frac{1}{\# \mathcal{C}_1}\prod_{k=1}^{j-1}\frac{H_{n_k}}{\psi(n_k)^dH_{n_{k+1}}}= C_1^{j-1} H_{n_j}^{-1}\prod_{k=1}^{j-1}\psi(n_k)^{-d}.
\end{equation}
Also
\begin{equation}\label{measurel}
\mu(R_{n_j,m})\ge C_1^{-j+1} H_{n_j}^{-1}\prod_{k=1}^{j-1}\psi(n_k)^{-d}.
\end{equation}

\subsection{Estimate the local dimension of $\mu$}

\begin{lem}\label{capwithball}
Let $B := B(x, r_1)$ and  $\widetilde{B} := B(y, r_2)$. Then for $n\ge1$, the set $B \cap (A^n-I)^{-1}\widetilde{B}$ can be covered by
$$\prod_{i: r_1\le (\lambda_i^n-1)^{-1}r_2}\lceil\frac{r_1 }{r_3}\rceil\prod_{i: r_1> (\lambda_i^n-1)^{-1}r_2>r_3}\lceil\frac{(\lambda_i^n-1)^{-1}r_2}{r_3}\rceil$$
balls of radius $r_3\le r_1$.
\end{lem}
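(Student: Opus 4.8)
The plan is to reduce the ellipsoid $(A^n-I)^{-1}\widetilde B$ to an axis-parallel box and then count how many small cubes (hence balls) are needed to cover its intersection with $B$, axis by axis. Since we have assumed (Remark \ref{assumption}) that $A$ is diagonalised over $\R$ with eigenvalues $\lambda_i>1$, there is a linear change of coordinates (bounded distortion absorbed into the implied constants, exactly as in Lemma \ref{local counting} and Corollary \ref{cor}) in which $(A^n-I)^{-1}$ acts diagonally with entries of modulus $(\lambda_i^n-1)^{-1}$. In these coordinates $(A^n-I)^{-1}\widetilde B$ is comparable to a rectangular box centred at some point, with side length $(\lambda_i^n-1)^{-1}r_2$ in the $i$-th direction, and $B$ is comparable to a cube of side $r_1$. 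It therefore suffices to cover the intersection of this box with that cube by cubes of side $\asymp r_3$.

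The intersection box has side length $\min\{r_1,\ (\lambda_i^n-1)^{-1}r_2\}$ in direction $i$. First I would split the index set $\{1,\dots,d\}$ into three parts according to which of the three quantities $r_1$, $(\lambda_i^n-1)^{-1}r_2$, $r_3$ is relevant: the set where $r_1\le(\lambda_i^n-1)^{-1}r_2$ (so the cube side $r_1$ dominates and the direction contributes a factor $\lceil r_1/r_3\rceil$); the set where $r_3<(\lambda_i^n-1)^{-1}r_2<r_1$ (the ellipsoid side dominates and contributes $\lceil (\lambda_i^n-1)^{-1}r_2/r_3\rceil$); and the set where $(\lambda_i^n-1)^{-1}r_2\le r_3$ (the intersection is already within one $r_3$-cube in that direction, contributing a factor $1$, which is exactly why that index is absent from the product in the statement). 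Taking the product of per-direction counts gives a covering of the intersection box by cubes of side $r_3$, each contained in a ball of radius $\asymp r_3$; since $r_3\le r_1$ the hypothesis guarantees the count in every active direction is at least $1$ and the ceilings are the right bookkeeping. Pulling back through the bounded-distortion change of coordinates only changes the number of balls by a multiplicative constant, which is suppressed in the $\lesssim$ convention.

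The only genuine subtlety is the comparison between the true ellipsoid $(A^n-I)^{-1}\widetilde B$ and the model axis-parallel box: one must know that the semi-axis lengths are comparable to $(\lambda_i^n-1)^{-1}r_2$ with constants independent of $n$. Under the standing assumption that $A$ is diagonalised this is immediate, and in general it follows from Lemma \ref{singular and eigen} (the distortion grows at most polynomially in $n$, which does not affect dimension), so I would simply invoke that reduction as already done in Remark \ref{assumption}. Everything else is the routine lattice-counting argument of Lemma \ref{local counting}, so I do not expect a real obstacle here; the statement is essentially a packaging of that geometry in a form convenient for the Cantor-set construction.
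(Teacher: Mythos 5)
Your proposal matches the paper's own argument: the paper likewise encloses $(A^n-I)^{-1}\widetilde{B}$ in an axis-parallel rectangle with side lengths comparable to $(\lambda_i^n-1)^{-1}r_2$, splits the indices into the same three classes $I_1,I_2,I_3$ according to the relative sizes of $r_1$, $(\lambda_i^n-1)^{-1}r_2$ and $r_3$, and counts cubes of side $r_3$ direction by direction, with the $I_3$ directions contributing a factor $1$ and the multiplicative constants absorbed into the $\asymp$ convention. This is essentially the same proof, so no further comment is needed.
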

\begin{proof}
Note that $(A^n-I)^{-1}\widetilde{B}$ is contained in a rectangle $R$ with lengths $2c_2(\lambda_i^n-1)^{-1}r_2$, $i=1,\dots,d$, and 
\begin{equation*}
\begin{split}
\{1,\dots,d\}&=\{i: r_1\le 2c_2 (\lambda_i^n-1)^{-1}r_2\}\cup\{i: r_1> 2c_2(\lambda_i^n-1)^{-1}r_2>r_3\}\\
&~~\quad\cup\{i: r_3 >2c_2(\lambda_i^n-1)^{-1}r_2\}\\
&=:I_1\cup I_2\cup I_3.
\end{split}
\end{equation*}
Take
  $$
L_i=\left\{\begin{array}{ll}
 2r_1 & \text{if  \ }i\in I_1\,,\\[2ex]
 2c_2r_2(\lambda_i^n-1)^{-1} & \text{if  \ }i\in I_2\cup I_3.
                         \end{array}
 \right.
  $$
Then $(A^n-I)^{-1}\widetilde{B}\cap B$ can be covered by a rectangle $\widetilde{R}$ with length $L_i$, $i=1,\dots,d$. It implies that $\widetilde{R}$ may be covered by
\begin{equation*}
\prod_{i\in I_1\cup I_2}\lceil\frac{L_i}{2r_3}\rceil\prod_{i\in I_3}1
\asymp \prod_{i: r_1\le (\lambda_i^n-1)^{-1}r_2}\lceil\frac{r_1 }{r_3}\rceil\prod_{i: r_1> (\lambda_i^n-1)^{-1}r_2>r_3}\lceil\frac{(\lambda_i^n-1)^{-1}r_2}{r_3}\rceil
\end{equation*}
squares with length $r_3$.

\end{proof}

For $x\in K$, $x\in K_j$ for $j\ge1$, and by the construction of $K_j$, there is a unique sequence of ellipsoids containing $x$, denoted by $\{E_j\}_j$. For $r>0$, there is some $j$ such that
\begin{equation}\label{j}
(\lambda_d^{n_j}-1)^{-1}\psi(n_j)\le r<(\lambda_d^{n_{j-1}}-1)^{-1}\psi(n_{j-1}),
\end{equation}
since $\psi(n_j)(\lambda_d^{n_j}-1)^{-1}$ is decreasing as $j\to\infty$.  And there are two cases:
\begin{itemize}
\item[ \rm  \textbf{(A)}\ ]   $\exists k\ge1$ such that 
$$(\lambda_{k+1}^{n_j}-1)^{-1}\psi(n_j)\le r<(\lambda_k^{n_j}-1)^{-1}\psi(n_j),$$
\item[\rm \textbf{(B)}\ ]  $(\lambda_1^{n_j}-1)^{-1}\psi(n_j)\le r.$
\end{itemize}

First we consider Case {\rm \textbf{(A)}\ }:  $(\lambda_{k+1}^{n_j}-1)^{-1}\psi(n_j)\le r<(\lambda_k^{n_j}-1)^{-1}\psi(n_j)$.

Let $B=B(x,r)$. For $j$  in \eqref{j}, by \eqref{nj} and \eqref{j}, we have ${\rm diam}(R_{n_{j+1},m})=2(\lambda_1^{n_{j+1}}-1)^{-1}\psi(n_{j+1})<(\lambda_d^{n_{j}}-1)^{-1}\psi(n_{j})<r$, hence up to a set of zero measure,
$$B\subset \bigcup_{B\cap R_{n_{j+1},m}\ne\varnothing\atop m\in \mathcal{C}_{j+1}}R_{n_{j+1},m}\subset \bigcup_{R_{n_{j+1},m}\subset 2B\atop m\in \mathcal{C}_{j+1}}R_{n_{j+1},m}.$$

Then 
\begin{equation}\label{mub}
\begin{split}
\mu(B)&\le \sum_{R_{n_{j+1},m}\subset 2B\atop m\in \mathcal{C}_{j+1}}\mu(R_{n_{j+1},m}).
\end{split}
\end{equation}
 Since $r<(\lambda_1^{n_j}-1)^{-1}\psi(n_j)$,  by Remark \ref{remark} and  $\alpha>\log(\lambda_d/\lambda_1)$, 
 $E_j$ is the unique ellipsoid of degree $n_j$ which intersects $2B$. 
 Then
\[\#\mathcal{C}_{j+1}( 2B)= \#\mathcal{C}_{j+1}(2B\cap E_j),\]
which implies that
\begin{equation*}
\begin{split}
\mu(B)&\le  (\#\mathcal{C}_{j+1}(2B\cap E_j) )\mu(R_{n_{j+1},m})\\
&\le  (\#\mathcal{C}_{j+1}(2B\cap E_j ) )\frac{\mu(E_j)}{\#\mathcal{C}_{j+1}(E_j)}
\end{split}
\end{equation*}
Now we estimate $\#\mathcal{C}_{j+1}(2B\cap E_j )$. 
 By Lemma \ref{capwithball}, $2B\cap E_j$ can be covered by 
\[C\prod_{i: 2r\le (\lambda_i^{n_j}-1)^{-1}\psi(n_j)}\lceil\frac{2r}{\ell}\rceil\prod_{i: 2r> (\lambda_i^{n_j}-1)^{-1}\psi(n_j)>\ell}\lceil\frac{(\lambda_i^{n_j}-1)^{-1}\psi(n_j)}{\ell}\rceil\]
balls with radius $\ell$, and by Lemma \ref{local counting}, if $\ell>(\lambda_1^{n_{j+1}}-1)^{-1}$, then any ball with radius $\ell$ contains about $\ell^dH_{n_{j+1}}$ periodic points in $\mathcal{P}_{n_{j+1}}$.
Take $\ell= \psi(n_j)(\lambda_d^{n_j}-1)^{-1}$, then
\begin{equation*}
\begin{split}
\#\mathcal{C}_{j+1}(2B\cap E_j )&\lesssim \ell^dH_{n_{j+1}}\prod_{i: 2r< (\lambda_i^{n_j}-1)^{-1}\psi(n_j)}\lceil\frac{r}{\ell}\rceil\prod_{i: 2r\ge (\lambda_i^{n_j}-1)^{-1}\psi(n_j)>\ell}\lceil\frac{(\lambda_i^{n_j}-1)^{-1}\psi(n_j)}{\ell}\rceil\\
&= \psi(n_j)^d(\lambda_d^{n_j}-1)^{-d}H_{n_{j+1}}\prod_{i=1}^k\frac{r(\lambda_d^{n_j}-1)}{\psi(n_j)}\prod_{i=k+1}^d\frac{(\lambda_d^{n_j}-1)}{(\lambda_i^{n_j}-1)}
\end{split}
\end{equation*}
By Corollary \ref{cor},
$$\#\mathcal{C}_{j+1}(E_j)\asymp \psi(n_j)^dH_{n_{j+1}}H_{n_j}^{-1}.$$
It follows that 
\begin{equation}\label{cjbej}
\frac{\#\mathcal{C}_{j+1}(2B\cap E_j )}{\#\mathcal{C}_{j+1}(E_j)}\lesssim \prod_{i=1}^k\frac{r(\lambda_i^{n_j}-1)}{\psi(n_j)}.
\end{equation}
Combining the assumption that $\psi(n)=e^{-\alpha n}$, it derives from these estimates,  \eqref{measureu} and \eqref{measurel} that
\begin{equation*}
\begin{split}
\frac{\log\mu(B)}{\log r}&\ge  \frac{1}{\log r}\Big( \log\mu(E_j)+\log \frac{\#\mathcal{C}_{j+1}(2B\cap E_j )}{\#\mathcal{C}_{j+1}(E_j)}\Big)\\
&
\ge\frac{1}{\log r}\Big( -\log H_{n_j}+d\alpha\sum_{i=1}^{j-1}n_i +k\log r +\sum_{i=1}^k\log(\lambda_i^{n_j}-1)-\sum_{i=1}^k\log \psi(n_j)+O(j)\Big)\\
&= k+\frac{1}{\log r}\Big( d\alpha\sum_{i=1}^{j-1}n_i  -\sum_{i=k+1}^d\log(\lambda_i^{n_j}-1)+\sum_{i=1}^k\alpha n_j+O(j)\Big)\\
&= k+\frac{1}{\log r}\Big(d\alpha\sum_{i=1}^{j-1}n_i  -n_j\sum_{i=k+1}^d\log\lambda_i+k\alpha n_j+O(j+1)\Big).
\end{split}
\end{equation*}
Notice that if $k\alpha-\sum_{i=k+1}^d\log\lambda_i>0$, we have 
\begin{equation*}
\begin{split}
\frac{\log\mu(B)}{\log r}&\ge k+\frac{n_j(k\alpha-\sum_{i=k+1}^d\log\lambda_i)}{\log ((\lambda_k^{n_j}-1)^{-1}\psi(n_j))}+\frac{1}{\log r}\big(d\alpha\sum_{i=1}^{j-1}n_i+ O(j+1)\big)\\
\end{split}
\end{equation*}
Note that
For $\epsilon >0$, there is some constant $j_1>1$ such that $j\ge j_1$
\[\frac{n_j(k\alpha-\sum_{i=k+1}^d\log\lambda_i)}{\log ((\lambda_k^{n_j}-1)^{-1}\psi(n_j))}\ge -\frac{k\alpha-\sum_{i=k+1}^d\log\lambda_i}{\alpha+\log\lambda_k}-\frac{\epsilon}{2}.\]
and by equation \eqref{limit0},
\begin{equation*}
\begin{split}
\frac{1}{\log r}\big(d\alpha\sum_{i=1}^{j-1}n_i+O(j+1)\big)&\gtrsim -\frac{1}{n_j(\alpha+\log\lambda_k)}\big(d\alpha\sum_{i=1}^{j-1}n_i+O(j+1)\big)\\
\end{split}
\end{equation*}
 increasingly tends to 0, as $j\to\infty$, hence there is some constant $j_2>1$ such that for all $j\ge j_2$, 
\[\frac{1}{\log r}\Big(d\alpha\sum_{i=1}^{j-1}n_i+O(j+1)\Big)\ge-\frac{\epsilon}{2}.\]
Combining these inequalities, we have
\begin{equation}\label{ek}
\frac{\log\mu(B)}{\log r}\ge  \frac{k\log\lambda_k+\sum_{i=k+1}^d\log\lambda_i}{\alpha+\log\lambda_k}-\epsilon
\end{equation}
for all $j>\max\{j_1,j_2\}$. 

 If $k\alpha-\sum_{i=k+1}^d\log\lambda_i\le0$, then for $j$ large enough, we get
\begin{equation}\label{ekp1}
\begin{split}
\frac{\log\mu(B)}{\log r}&\ge  k+\frac{1}{\log((\lambda_{k+1}^{n_j}-1)^{-1}\psi(n_j)}\Big(k\alpha n_j-n_j\sum_{i=k+1}^d\log\lambda_i\Big)-\frac{\epsilon}{2}\\
&\ge k-\frac{k\alpha-\sum_{i=k+1}^d\log\lambda_i}{\alpha+\log\lambda_{k+1}}-\epsilon\\
&= \frac{(k+1)\log\lambda_{k+1}-\sum_{i=k+2}^d\log\lambda_i}{\alpha+\log\lambda_{k+1}}-\epsilon.
\end{split}
\end{equation}
It follows from inequalities \eqref{ek} and \eqref{ekp1} that
\[\frac{\log\mu(B)}{\log r}\ge\min\Big\{ \frac{(k+1)\log\lambda_{k+1}-\sum_{i=k+2}^d\log\lambda_i}{\alpha+\log\lambda_{k+1}}, \frac{k\log\lambda_k-\sum_{i=k+1}^d\log\lambda_i}{\alpha+\log\lambda_k}\Big\}-\epsilon.\]

  Now we consider Case ({\rm \textbf{B}}):  $(\lambda_1^{n_j}-1)^{-1}\psi(n_j)\le r<(\lambda_d^{n_{j-1}}-1)^{-1}\psi(n_{j-1})$.\\
Since $2B$ does not intersect ellipsoids of degree $n_{j-1}$ except $E_{j-1}$
, we have 
\[\mu(B)\le \sum_{m\in\mathcal{C}_j\atop R_{n_j,m}\cap B\ne\varnothing}\mu(R_{n_j,m})\le \sum_{m\in\mathcal{C}_j\atop R_{n_j,m}\subset 2B}\mu(R_{n_j,m})\le (\#\mathcal{C}_j(2B\cap E_{j-1}))\mu(R_{n_j,m}).\]
There are three cases to consider.

{\bf Case (i) :}\, $r\ge(\lambda_1^{n_j}-1)^{-1}$.\\
Applying Lemma \ref{local counting}, $\#\mathcal{C}_j(2B\cap E_{j-1})\asymp r^dH_{n_j}.$ By \eqref{measureu} and \eqref{measurel},
\[\log\mu(R_{n_j,m})=-\log H_{n_j}+d\alpha \sum_{i=1}^{j-1}n_i+O(j). \]
Therefore 
\begin{equation*}
\begin{split}
\frac{\log\mu(B)}{\log r}&\ge d+\frac{1}{\log r}\Big(d\alpha\sum_{i=1}^{j-1}n_i+O(j)\Big)\\
&\ge d+\frac{d\alpha\sum_{i=1}^{j-1}n_i+O(j)}{\log \lambda_d^{-n_{j-1}}\psi(n_{j-1})}\\
&\ge d-\frac{d\alpha}{\alpha+\log\lambda_d}-\epsilon\\
&=\frac{d\log\lambda_d}{\alpha+\log\lambda_d}-\epsilon.
\end{split}
\end{equation*}

{\bf Case (ii) :}\,  $(\lambda_1^{n_j}-1)^{-1}\psi(n_j)\le r<(\lambda_d^{n_j}-1)^{-1}$.\\

 By Lemma \ref{local counting}, $\#\mathcal{C}_j(2B\cap E_{j-1})\le  1 $, 
hence $\mu(B)\le  \mu(E_j)$ which derives that for $j$ large enough
 \begin{equation*}
\begin{split}
\frac{\log\mu(B)}{\log r}&\ge \frac{1}{\log r}(-\log H_{n_j}+d\alpha\sum_{i=1}^{j-1}n_i+O(j))\\
&\ge \frac{\sum_{i=1}^d\log(\lambda_i^{n_j}-1)}{\log((\lambda_1^{n_j}-1)^{-1}\psi(n_j))}-\epsilon\\
&\ge \frac{\sum_{i=1}^d\log\lambda_i}{\log\lambda_1+\alpha}-\epsilon.
\end{split}
\end{equation*}

{\bf Case (iii) :}\, There exists $1\le k\le d-1$ such that 
\[(\lambda_{k+1}^{n_j}-1)^{-1}\le r<(\lambda_k^{n_j}-1)^{-1}.\]
By Lemma \ref{local counting}, $\#\mathcal{C}_j(2B\cap E_{j-1})\lesssim \prod_{i=k+1}^d(\lambda_i^{n_j}-1)r$, which derives that 
$$\mu(B)\le \mu(R_{n_j,m})\prod_{i=k+1}^d(\lambda_i^{n_j}-1)r.$$
Then 
\begin{equation*}
\begin{split}
\frac{\log\mu(B)}{\log r}&\ge \frac{1}{\log r}\Big(-\log H_{n_j}+d\alpha\sum_{i=1}^{j-1}n_i+O(j)+\sum_{i=k+1}^d\log(\lambda_1^{n_j}-1)+(d-k)\log r\Big)\\
& =d-k+\frac{1}{\log r}\Big(-\sum_{i=1}^k\log (\lambda_i^{n_j}-1) +d\alpha\sum_{i=1}^{j-1}n_i+O(j)\Big)\\
&\ge \frac{(d-k)(\alpha+\log\lambda_1)+\sum_{i=1}^k\log\lambda_i}{\alpha+\log\lambda_1}-\epsilon \\
&=\frac{\sum_{i=1}^d\log\lambda_i+\sum_{i=k+1}^d(\alpha-\log(\lambda_i/\lambda_1))}{\alpha+\log\lambda_1}-\epsilon\ge \frac{\sum_{i=1}^d\log\lambda_i}{\alpha+\log\lambda_1} -\epsilon,\\
\end{split}
\end{equation*}
where the last inequality holds since $\alpha>\log(\lambda_d/\lambda_1)$.

Combing Case (\textbf{A}) and Case (\textbf{B}),  for $j$ large enough, we have
\[\frac{\log\mu(B)}{\log r}\ge \min_k\Big\{\frac{k\log\lambda_k+\sum_{i=k+1}^d\lambda_i}{\alpha+\log\lambda_k}-\epsilon\Big\}.\]
letting $\epsilon\to0$ and applying the mass distribution principle \cite[Proposition 2.3]{falconer}, we have 
\begin{equation}\label{dimensionformula}
\dim_{\rm H}R(\psi)\ge \min_k\Big\{\frac{k\log\lambda_k+\sum_{i=k+1}^d\lambda_i}{\alpha+\log\lambda_k}\Big\}.
\end{equation}

Now we deal with the case that $\alpha=\log(\lambda_d/\lambda_1)$. Given $\eta>\log(\lambda_d/\lambda_1)$, define $\psi_{\eta}:\ \mathbb{R}^+\to \mathbb{R}^+$ as
\[\psi_{\eta}\colon x\mapsto e^{-x\eta}. \]
There exists an infinite set $\mathcal{N}_{\eta}\subset \mathbb{N}$ such that 
\[\psi(n)>\psi_{\eta}(n)=e^{-n\eta},\quad n\in \mathcal{N}_{\eta}.\]
Let 
\[W_{\eta}:=\{x\in\mathbb{T}^d\colon T^nx\in B(x,\psi_{\eta}(n))~\text{i.m. }n\in \mathcal{N}_{\eta}\}.\]
Note that 
\[\liminf_{n\to\infty\atop n\in \mathcal{N}_{\eta}}\frac{-\log \psi_{\eta}(n)}{n}=\eta,\]
and
\[W_{\eta}\subset R(\psi).\]
From these and inequality \eqref{dimensionformula}, we have
\[\dim_{\rm H}R(\psi)\ge \dim_{\rm H}W_{\eta}\ge \min_k\Big\{\frac{k\log\lambda_k+\sum_{i=k+1}^d\lambda_i}{\eta+\log\lambda_k}\Big\}\]
for any $\eta>\log(\lambda_d/\lambda_1)$. Letting $\eta\to \alpha:=\log(\lambda_d/\lambda_1)$, we get
\[\dim_{\rm H}R(\psi)\ge\min_k\Big\{\frac{k\log\lambda_k+\sum_{i=k+1}^d\lambda_i}{\alpha+\log\lambda_k}\Big\}.\]

Next we consider the case $\alpha=\infty$. Given $M>0$, define $\psi_{M}:\ x\mapsto e^{-xM}$. 
Then for $n$ large enough, we have 
\[\psi(n)<\psi_M(n)=e^{-nM}.\]
Let 
\[W_M:=\{x\in\mathbb{T}^d\colon T^nx\in B(x,\psi_M(n))~\text{i.m. }n\ge1\}.\]
Since $W_M\supset R(\psi)$, by Section \ref{up}, it follows that
\[0\le \dim_{\rm H}R(\psi)\le \dim_{\rm H}W_M\le \min_k\Big\{\frac{k\log\lambda_k+\sum_{i=k+1}^d\lambda_i}{M+\log\lambda_k}\Big\}.\]
 Letting $M\to \infty$, we get
\[\dim_{\rm H}R(\psi)=0.\]

Then combining these with Section \ref{up}, we finish the proof of Theorem \ref{theorem1}.

\section{The proof of Theorem \ref{theorem2}}

 Since $A$ is diagonalizable over $\mathbb{Q}$, there is an invertible  matrix $P$ such that 
\[A=P^{-1}DP,\]
where $D=\rm{diag}\{\lambda_1,\dots,\lambda_d\}$ is a diagonal matrix. 
The matrix $P$ consists of $d^2$ rational numbers. Denote $\beta$ the least common multiple of the denominators of elements of $P$. Write $\widetilde{P}=\beta P$, and $\widetilde{P}$ is an integer matrix. Then
\[A=\widetilde{P}^{-1}D\widetilde{P}.\]

Define $f\colon \mathbb{T}^d\to\mathbb{T}^d$ as
\[f\colon x\mapsto \widetilde{P}x\pmod 1. \]
Then $f \circ T=D\pmod1\circ f$. Since $\widetilde{P}$ is nonsingular, its singular values $e_1,\dots,e_d$ are strictly larger than 0. Denote
\[e_{min}=\min\{e_1,\dots,e_d\},\quad e_{max}=\max\{e_1,\dots,e_d\}.\]
Then
\begin{equation}\label{lipschitz}
e_{min}^d|x-y|\le |f(x)-f(y)|\le e_{max}^d|x-y|.
\end{equation}
It shows that $f $ is a bi-Lipschitz mapping.

For $n\ge1$, 
\begin{equation*}
\begin{split}
R_n(\psi)&=\{x\in\mathbb{T}^d\colon (A^n-I)x\pmod 1\in B(0,\psi(n))\}\\
&= \{x\in\mathbb{T}^d\colon \widetilde{P}^{-1}(D^n-I)\widetilde{P}x\pmod1\in B(0,\psi(n))\}\\
&=f^{-1}\{y\in\mathbb{T}^d\colon (D^n-I)y\pmod1\in f(B(0,\psi(n)))\}\\
&=:f^{-1}E_n
\end{split}
\end{equation*}
By inequalities \eqref{lipschitz}, 
\[B(0,e_{min}^d\psi(n))\subset f(B(0,\psi(n)))\subset B(0,e_{max}^d\psi(n)).\]
Hence $\dim_{\rm H}\big(\limsup\limits_{n\to\infty} E_n\big)$ equals to the Hausdorff dimension of 
\[\limsup_{n\to\infty}\Big\{y\in\mathbb{T}^d\colon (D^n-I)y\pmod1\in B(0,a\psi(n))\Big\}=:\limsup_{n\to\infty}E_n'\]
for any given constant $a>0$. It implies that
\[\dim_{\rm H}\big(\limsup_{n\to\infty} R_n(\psi)\big)=\dim_{\rm H}\big(\limsup_{n\to\infty} f(R_n(\psi))\big)=\dim_{\rm H}\big(\limsup_{n\to\infty} E_n'\big).\]
Note that 
\[\liminf_{n\to\infty}\frac{-\log (a\psi(n))}{n}=\alpha.\]
Combining the fact that $D$ is an integer matrix and Theorem 1.7 in \cite{hl}, we have  
 \[\dim_{\rm H}R(\psi)=\min_{1\le j\le d}\Big\{\frac{j\log|\lambda_j|+\sum_{k\in\mathcal{K}(j)}(\alpha+\log|\lambda_j|-\log|\lambda_i|)+\sum_{i=j+1}^d\log|\lambda_i|}{{\log|\lambda_j|+\alpha}}\Big\}
,\]
where
\[\mathcal{K}(j):=\{1\le i \le d\colon \log|\lambda_i|>\log|\lambda_j|+\alpha\}.\]

\subsection*{Acknowledgements}

Z. H. was supported by Science Foundation of China University of Petroleum, Beijing (Grant No. 2462023SZBH013), and China Postdoctoral Science Foundation (Grant No. 2023M743878). B. L. was supported partially by NSFC 12271176.

\end{document}